\newtheorem{thm}{Theorem}[section]
\newtheorem{lem}[thm]{Lemma}
\newtheorem{prop}[thm]{Proposition}
\newtheorem*{definition*}{Definition}
\newtheorem{cor}[thm]{Corollary}
\newtheorem{conj}[thm]{Conjecture}
\theoremstyle{remark}
\theoremstyle{remark}
\newcommand{\llra}{\Longleftrightarrow}
\newcommand{\mP}{\mathbb{P}}
\newcommand{\Q}{\mathbb{Q}}
\newcommand{\C}{\mathbb{C}}
\newcommand{\fr}{\frac}
\newcommand{\CC}{\mathbb{C}}
\title{A lower bound on the canonical height for polynomials}
\author[Nicole R. Looper]{Nicole R. Looper}
\address{Department of Mathematics, Northwestern University; 2033 Sheridan Road, Evanston, IL 60208, USA}
\email{nlooper@math.northwestern.edu}
\begin{document}

		\begin{abstract} \normalsize We prove a lower bound on the canonical height associated to polynomials over number fields evaluated at points with infinite forward orbit. The lower bound depends only on the degree of the polynomial, the degree of the number field, and the number of places of bad reduction.
			
		\end{abstract}
		\maketitle
		\renewcommand{\thefootnote}{}
		\footnote{\emph{2010 Mathematics Subject Classification}: Primary: 11G50, 37P30, 37P45. Secondary: 37P15}
		\footnote{Research partially supported by an NSF Graduate Research Fellowship.}

		\section{Introduction}
		
		The canonical height associated to a rational morphism $\phi:\mP^1\to\mP^1$ of degree at least two over a number field $K$ is a function $\hat{h}_\phi:\mP^1(\overline{K})\to\mathbb{R}$ satisfying \[\hat{h}_\phi(\phi(\alpha))=\textup{deg}(\phi)\hat{h}_\phi(\alpha) \textup{ \hspace{3mm} and \hspace{3mm} } \hat{h}_\phi(\alpha)=h(\alpha)+O(1),\]  where $h$ is the Weil logarithmic height \cite{CallSilverman}. One basic property of $\hat{h}_\phi$ is that $\hat{h}_\phi(\alpha)=0$ if and only if $\phi^i(\alpha)=\phi^j(\alpha)$ for some $i\ne j$. A natural question then arises: how small can $\hat{h}_\phi(\alpha)$ be under the assumption that $\alpha\in K$ has infinite forward orbit under $\phi$? We will address how the minimal possible nonzero value of $\hat{h}_\phi(\alpha)$ grows with the height of $\phi$ in the moduli space $\mathcal{M}_d$ of degree $d$ dynamical systems.
		
		The canonical height attached to $\phi$ has an analogue in the setting of elliptic curves. Lang conjectured that for an elliptic curve $E$ over a number field $K$ with minimal discriminant $\mathscr{D}_{E/K}$, \[\hat{h}(P)\ge c_1\log|\textup{Norm}_{K/\Q}\mathscr{D}_{E/K}|+c_2\] for any non-torsion point $P\in E(K)$, where $c_1>0$ and $c_2$ depend only on $K$ \cite[p. 92]{Lang}. In \cite{Silverman2}, Silverman gives a partial solution to this conjecture, showing that it holds for constants $c_1>0$ and $c_2$ depending on $[K:\Q]$ and on the number of primes at which $E/K$ has split multiplicative reduction. 
		
		In the dynamical setting, Silverman has made the following conjecture \cite[\S4.11]{Silverman3}. 
		
		\begin{conj}{\label{conj:mincanht}} Let $h_{\mathcal{M}_d}$ be the height function associated to an embedding of the space of degree $d\ge 2$ rational maps $\mathcal{M}_d$ into projective space, and let $K$ be a number field. There exists a positive constant $c$ depending only on $K$, $d$, and the choice of $h_{\mathcal{M}_d}$, such that for all rational maps $\phi\in K(z)$ of degree $d$, \[\hat{h}_\phi(P)\ge c\max\{h_{\mathcal{M}_d}(\langle \phi\rangle),1\}\] for any non-preperiodic point $P\in\mP^1(K)$.
		\end{conj}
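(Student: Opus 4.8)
The plan is to prove Conjecture~\ref{conj:mincanht} in the case that $\phi$ is a polynomial, which is the setting where the escape-rate machinery below applies cleanly; even here the argument I have in mind yields a constant $c$ that also depends on the number $s$ of places of bad reduction, and removing that dependence is the crux. \textbf{Step 1 (normalization and reduction to the critical height).} Conjugating by an affine map leaves both $\hat h_\phi$ and $h_{\mathcal{M}_d}(\langle\phi\rangle)$ unchanged and enlarges the ground field by degree at most $d-1$, so one may assume $\phi(z)=z^d+c_{d-2}z^{d-2}+\cdots+c_0$ is monic and centered with coefficients in a field $K'$, $[K':\Q]\le (d-1)[K:\Q]$, with monic centered representatives of $\langle\phi\rangle$ forming a single $\mu_{d-1}$-orbit. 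In this model $h_{\mathcal{M}_d}(\langle\phi\rangle)$ equals, up to multiplicative and additive constants depending only on $d$, the weighted-projective height of $(c_0,\dots,c_{d-2})$, and (by the comparison between the moduli height and the critical height, which for polynomials agree up to $O_d(1)$ after rescaling) also the critical height $\hat h_{\mathrm{crit}}(\phi):=\sum_{\gamma\in\operatorname{Crit}(\phi)}\hat h_\phi(\gamma)$. It therefore suffices to establish, for constants depending only on $d$ and $[K:\Q]$, both $\hat h_\phi(\alpha)\ge c_1\hat h_{\mathrm{crit}}(\phi)$ and a uniform positive lower bound $\hat h_\phi(\alpha)\ge c_2$ for non-preperiodic $\alpha\in K$; the latter governs the degenerate regime $h_{\mathcal{M}_d}(\langle\phi\rangle)=O_d(1)$ and is itself nontrivial (for a fixed $K$ it follows by Northcott over the finitely many relevant conjugacy classes).

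\textbf{Step 2 (local analysis via escape rates).} Write $\hat h_\phi(\beta)=\frac{1}{[K':\Q]}\sum_v n_v\, G_{\phi,v}(\beta)$ with $G_{\phi,v}(\beta)=\lim_n d^{-n}\log^+|\phi^n(\beta)|_v\ge 0$ the local Green's function, whose zero set is the filled Julia set $\mathcal{K}_{\phi,v}$. At a place $v$ of good reduction $\mathcal{K}_{\phi,v}$ is the closed unit ball, $G_{\phi,v}=\log^+|\cdot|_v$, and every critical point lies in $\mathcal{K}_{\phi,v}$, so such $v$ contribute nothing to $\hat h_{\mathrm{crit}}(\phi)$. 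At a place $v$ of bad reduction one exploits $\phi'(z)=d\prod_j(z-\gamma_j)$: the chain rule gives $\log|(\phi^m)'(\alpha)|_v=m\log|d|_v+\sum_{k<m}\sum_j\log|\phi^k(\alpha)-\gamma_j|_v$, so a critical point that escapes $\mathcal{K}_{\phi,v}$ forces the $v$-adic distance from the orbit of $\alpha$ to that critical point to stay bounded below, and a contraction/expansion estimate for $|\phi(x)-\phi(y)|_v$ near $\mathcal{K}_{\phi,v}$ then shows that, unless the whole forward orbit of $\alpha$ is trapped $v$-adically extremely close to $\mathcal{K}_{\phi,v}$, the escape rate $G_{\phi,v}(\alpha)$ absorbs a definite fraction of $G_{\phi,v}(\gamma_j)$.

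\textbf{Step 3 (globalization).} Suppose $\hat h_\phi(\alpha)<\delta\,\hat h_{\mathrm{crit}}(\phi)$ with $\delta$ small. Since $h(\phi^n(\alpha))=d^n\hat h_\phi(\alpha)+O_d(h_{\mathcal{M}_d}(\langle\phi\rangle))$, the initial segment $\alpha,\phi(\alpha),\dots,\phi^m(\alpha)$ consists of distinct elements of $K'$ of Weil height $O_d(h_{\mathcal{M}_d}(\langle\phi\rangle))$ with $m\gg_d\log(1/\delta)$. Feeding the nonzero elements $\phi^i(\alpha)-\phi^j(\alpha)$ and $(\phi^m)'(\alpha)$ into the product formula and separating good from bad places, one finds that the relevant local terms at good places are non-positive once the orbit lies in the unit ball, so the entire deficit is carried at the $s$ bad places, where Step~2 converts it into an upper bound for $\hat h_{\mathrm{crit}}(\phi)$ in terms of $\hat h_\phi(\alpha)$, $d$, $[K:\Q]$, and $s$; taking $\delta$ below the reciprocal of that constant contradicts the hypothesis. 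The same bookkeeping rules out $\alpha$ being trapped in $\mathcal{K}_{\phi,v}$ at every bad place simultaneously, since together with good reduction elsewhere that would force the orbit of $\alpha$ to be bounded, hence $\alpha$ preperiodic by Northcott.

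\textbf{The main obstacle.} It is precisely Step~3 that introduces the $s$-dependence: the product-formula accounting distributes the lower bound for $\hat h_\phi(\alpha)$ among the bad places, and nothing above prevents $\alpha$ from being $v$-adically very close to a periodic point at many bad places at once, so each bad place is only guaranteed a roughly $1/s$-sized share of $\hat h_{\mathrm{crit}}(\phi)$. Removing this dependence — which is exactly what the full conjecture demands, and which is equivalent to uniformity over all number fields of bounded degree — appears to need a further global input: either a quantitative equidistribution or energy inequality for the Galois orbits of the iterates $\phi^n(\alpha)$ against the adelic canonical measure of $\phi$, with constants uniform in $\phi$, or an $abc$-type estimate controlling the simultaneous $v$-adic approximation of $\alpha$ to the finitely many periodic points of small canonical height across all bad places. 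Supplying such an input is the essential gap between the argument above and Conjecture~\ref{conj:mincanht} for polynomials.
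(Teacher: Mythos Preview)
The statement is a \emph{conjecture}, and the paper does not prove it; the paper establishes only the weaker Theorem~\ref{thm:minht}, in which the constant is allowed to depend also on the number $s$ of places of bad reduction. You say this yourself: your opening sentence and your closing paragraph both concede that the argument produces an $s$-dependent bound and that removing the $s$-dependence would require an additional global input (quantitative equidistribution or an $abc$-type estimate) that you do not supply. So as a proof of Conjecture~\ref{conj:mincanht} the proposal has, by its own admission, a genuine gap; what you have sketched is at best a route to Theorem~\ref{thm:minht}.

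Comparing your sketch of the $s$-dependent result with the paper's proof of Theorem~\ref{thm:minht}: the global architecture (normalize, do a local estimate at each bad place, globalize via the product formula, pigeon-hole over bad places) is the same, but your local Step~2 is handled very differently and is where your sketch is thinnest. You invoke the chain-rule identity $\log|(\phi^m)'(\alpha)|_v=m\log|d|_v+\sum_{k<m}\sum_j\log|\phi^k(\alpha)-\gamma_j|_v$ together with an unspecified ``contraction/expansion estimate near $\mathcal{K}_{\phi,v}$'' to conclude that $G_{\phi,v}(\alpha)$ captures a definite fraction of $G_{\phi,v}(\gamma_j)$. This is precisely the hard step, and as written it is an assertion rather than an argument: for a general degree-$d$ polynomial one has no a priori control on $|(\phi^m)'(\alpha)|_v$, and a low-local-height iterate $\phi^k(\alpha)$ need not lie near any critical point $\gamma_j$. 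The paper replaces this with a concrete mechanism: the annulus-modulus computation of \S\S3--4 (Proposition~\ref{prop:annuli}, Corollary~\ref{cor:preimagesmash}, Proposition~\ref{prop:badredpreimagesquash}) shows, via the B\"ottcher coordinate and the Gr\"otzsch inequality, that every preimage under $f_{\textbf{c}}^{3}$ of a point not in $B_{f_{\textbf{c}},v}(\infty)$ lies within $\exp\!\big(-\tfrac{1}{d-1}M_v(f_{\textbf{c}})+O_d(1)\big)$ of some root of $f_{\textbf{c}}^{3}$, and then pigeon-holes among those $d^3$ roots (Proposition~\ref{prop:pigeonhole}) to produce many pairs $i<j$ with $|f^i(\alpha)-f^j(\alpha)|_v$ small. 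Your critical-height/derivative framing is a legitimate alternative viewpoint, but the paper's contribution is exactly to make the vague part of your Step~2 rigorous; absent that, your outline does not yet yield even the $s$-dependent theorem.
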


In this article, we prove a weaker version of this claim in the case of polynomials, analogously to the aforementioned result of Silverman.
		
		\begin{thm}{\label{thm:minht}} Let $h_{\mathcal{M}_d}$ be the height function associated to an embedding of the space of degree $d\ge 2$ rational maps $\mathcal{M}_d$ into projective space, and let $K$ be a number field. Suppose $\phi\in K[z]$ is a polynomial of degree $d$ and has $s$ or fewer places of bad reduction. There exist constants $\kappa_1>0$ and $\kappa_2$ depending only on $d$, $s$, $[K:\Q]$, and the choice of $h_{\mathcal{M}_d}$ such that \begin{equation}{\label{eqn:mainhtbound}}\hat{h}_\phi(\alpha)\ge\kappa_1h_{\mathcal{M}_d}(\langle\phi\rangle)+\kappa_2\end{equation} for any $\alpha\in K$ having infinite forward orbit under $\phi$.
			
		\end{thm}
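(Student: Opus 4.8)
The plan is to turn the theorem into a lower bound for $\hat h_\phi(\alpha)$ in terms of the naïve heights of the coefficients of a normalized model of $\phi$, and then to recover those coefficient heights from a bounded initial segment of the orbit of $\alpha$. First I would normalize: after conjugating by an affine map $f\colon z\mapsto \lambda z+\mu$ defined over an extension $K'/K$ with $[K':K]\le d-1$, we may assume $\phi(z)=z^d+a_{d-2}z^{d-2}+\cdots+a_0$ with $a_i\in K'$. Conjugation fixes $h_{\mathcal{M}_d}(\langle\phi\rangle)$ and replaces $\hat h_\phi(\alpha)$ by $\hat h_{\psi}(f^{-1}(\alpha))$ with $\psi=f^{-1}\circ\phi\circ f$, so the set of attained values is unchanged; we have $[K':\mathbb{Q}]\le(d-1)[K:\mathbb{Q}]$, and the normalized map has at most $c_0(d)(s+1)$ places of bad reduction over $K'$ (the new bad places lie over the old ones, or divide $d!$). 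Since $\langle\phi\rangle$ is the image of $(a_0,\dots,a_{d-2})$ under a morphism from the space of monic centered polynomials, $h_{\mathcal{M}_d}(\langle\phi\rangle)\le c_1\max_i h(a_i)+c_2$ with $c_1,c_2$ depending on $d$ and the embedding. So it suffices to prove $\hat h_\phi(\alpha)\ge\kappa_1'\max_i h(a_i)+\kappa_2'$, with $\kappa_i'$ allowed to depend on $d$, $s$, $[K:\mathbb{Q}]$ and the embedding.

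Next I would decompose $\hat h_\phi(\alpha)=\tfrac{1}{[K':\mathbb{Q}]}\sum_v[K'_v:\mathbb{Q}_v]\,g_{\phi,v}(\alpha)$, where $g_{\phi,v}(z)=\lim_{n\to\infty}d^{-n}\log^+|\phi^n(z)|_v$ is the local escape-rate (Green's) function, and record the standard facts: $g_{\phi,v}\ge0$ everywhere; $g_{\phi,v}=\log^+|\cdot|_v$ at every non-archimedean place of good reduction; and at every $v$, $|g_{\phi,v}(z)-\log^+|z|_v|\le\mu_v+c_3(d)$, where $\mu_v:=\max_i\log^+|a_i|_v$ and $c_3(d)=0$ at finite places. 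Let $S$ be the (finite) set of places that are archimedean or of bad reduction, so $|S|$ is bounded in terms of $d$, $s$, $[K:\mathbb{Q}]$. Then $\max_i h(a_i)\le\tfrac{1}{[K':\mathbb{Q}]}\sum_{v\in S}[K'_v:\mathbb{Q}_v]\mu_v\le\sum_i h(a_i)$, so the content is to bound $\sum_v[K'_v:\mathbb{Q}_v]g_{\phi,v}(\alpha)$ below by a fixed multiple of $\sum_{v\in S}[K'_v:\mathbb{Q}_v]\mu_v$ up to a bounded error.

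For the second ingredient I would use that $\alpha$ has infinite forward orbit, so the points $\alpha_k:=\phi^k(\alpha)$ for $0\le k\le d-1$ are pairwise distinct and the $d-1$ relations $\alpha_k^d+\sum_{i=0}^{d-2}a_i\alpha_k^i=\alpha_{k+1}$ form a linear system for $(a_0,\dots,a_{d-2})$ whose matrix is the Vandermonde matrix of $\alpha_0,\dots,\alpha_{d-2}$, with nonzero determinant $\Delta:=\prod_{0\le k<l\le d-2}(\alpha_l-\alpha_k)$. Cramer's rule writes each $a_i$ as (a bounded-degree polynomial in $\alpha_0,\dots,\alpha_{d-1}$)$/\Delta$, so $\max_i h(a_i)\le c_4(d)\max_{0\le k\le d-1}h(\alpha_k)$. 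Since $\hat h_\phi(\alpha_k)=d^k\hat h_\phi(\alpha)$ and $h(\alpha_k)$ agrees with $\tfrac{1}{[K':\mathbb{Q}]}\sum_v[K'_v:\mathbb{Q}_v]g_{\phi,v}(\alpha_k)$ outside $S$, we get $h(\alpha_k)\le d^k\hat h_\phi(\alpha)+\tfrac{1}{[K':\mathbb{Q}]}\sum_{v\in S}[K'_v:\mathbb{Q}_v](\mu_v+c_3(d))$; combining with Cramer reduces the theorem to comparing $\max_k h(\alpha_k)$ with $\hat h_\phi(\alpha)$.

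The hard part — and the real content of the theorem — is that the per-place discrepancies $\mu_v$ in the last estimate are of exactly the size one is trying to control, so feeding this bound into Cramer's rule naïvely is circular: at a bad place the orbit of $\alpha$ can stay near a large filled Julia set forever, contributing nothing to $\hat h_\phi$ even though $\mu_v$ is large there. To break the circularity one must show that the orbit segment $\alpha_0,\dots,\alpha_{d-1}$ cannot be ``deep in the Julia set'' at too many places at once. Concretely I would analyze, place by place in $S$, the local canonical heights of the $\alpha_k$ together with the $v$-adic sizes $|\alpha_l-\alpha_k|_v$, and combine this with the product-formula identity $\sum_v[K'_v:\mathbb{Q}_v]\log|\Delta|_v=0$ applied to the Vandermonde determinant, so that the cancellation forced on the Cramer expressions at the places where $\alpha$ lies near $\mathcal{K}_v$ is paid for by the contributions of the remaining places; the bound on the number of bad places is exactly what makes the residual error an honest constant depending only on $d$, $s$, $[K:\mathbb{Q}]$ rather than a self-referential multiple of $\max_i h(a_i)$. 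I expect essentially all of the difficulty to be concentrated in this bad-place analysis; the rest is bookkeeping with heights and the elementary linear algebra above.
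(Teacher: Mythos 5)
The proposal diverges from the paper's method and, more importantly, stops exactly where the real difficulty begins. Your plan is to recover the coefficients $(a_0,\dots,a_{d-2})$ from the first $d$ orbit points $\alpha_0,\dots,\alpha_{d-1}$ by Cramer's rule on a Vandermonde system, and then to compare $\max_i h(a_i)$ with $\hat h_\phi(\alpha)$ using the product formula on $\Delta$. But as you yourself observe, this is circular: the error $|h(\alpha_k)-\hat h_\phi(\alpha_k)|$ is of size $O(\max_i h(a_i))$, so running the inequalities around the loop only yields $\max_i h(a_i)\le C_1\hat h_\phi(\alpha)+C_2\max_i h(a_i)$, which is vacuous unless one can show $C_2<1$, and there is no reason for that. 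The paragraph meant to break the circularity (``analyze, place by place in $S$, the local canonical heights\dots combine with the product-formula identity\dots so that the cancellation\dots is paid for by the contributions of the remaining places'') names the obstacle but offers no mechanism for overcoming it. In fact the obstacle is fatal to the specific strategy: nothing prevents an orbit from sitting ``deep'' inside the filled Julia set at \emph{every} place in $S$ for the first $d$ (indeed, for arbitrarily many) iterates before escaping, so a bounded initial orbit segment simply cannot pin down the coefficients in a way that is commensurable with $\hat h_\phi(\alpha)$. Your $d$-iterate Vandermonde linear system contains no dynamical information about \emph{where} in $\mathbb{C}_v$ the orbit can sit, and the product formula alone cannot supply it.

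What is missing is precisely the geometric input that drives the paper's proof: a quantitative statement that at any place $v$ of bad reduction (or archimedean place), every $\alpha$ lying outside the basin of infinity has all of its preimages under $f^3$ pinned within distance roughly $\exp(-\tfrac{1}{d-1}M_v(f))$ of some root of $f^3$. This is Corollary~\ref{cor:preimagesmash} and Proposition~\ref{prop:badredpreimagesquash}, and it is proved via the Branner--Hubbard/DeMarco--McMullen analysis of annulus moduli between level curves of $G_{f,v}$, using the B\"ottcher coordinate and the Gr\"otzsch inequality (Proposition~\ref{prop:annuli}) in the archimedean case and the non-archimedean analogue of modulus transformation in the finite case. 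Once one has this ``preimages cluster near roots of $f^3$'' statement, the paper does \emph{not} solve for the coefficients; instead it takes a long orbit segment of length $N\asymp (d^3+3)^{r+s+1}$ and uses a pigeon-hole at each place in $M_L^1$: either many $f^k(\alpha)$ escape to the basin (so $\hat\lambda_v(\alpha)$ is already large), or many $f^k(\alpha)$ are near the same one of the at most $d^3$ roots of $f^3$, hence near each other. Iterating this over the bounded set of bad and archimedean places gives a large common subset $Y$ of indices with $\lambda_v(f)+d(d-1)\log|f^i(\alpha)-f^j(\alpha)|_v\le d^{j+3}\hat\lambda_v(\alpha)+O(1)$ at every place, and the product formula applied to $f^i(\alpha)-f^j(\alpha)$ (not to a Vandermonde determinant) then delivers $h(f)\ll\hat h_f(\alpha)$. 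So the two proofs differ both in the algebraic gadget (pairwise differences of iterates versus a Vandermonde determinant), in the length of orbit used ($N$ growing exponentially in $r+s$, versus $d$), and most essentially in the presence or absence of the annulus/Green's-function estimate. In its current form your proposal has a genuine gap: the ``bad-place analysis'' it defers to is not a routine computation but is the theorem, and the tools you assemble (Cramer, Vandermonde, product formula on $\Delta$, bounded initial segment) do not appear adequate to carry it out.
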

	In particular, we recover full uniformity across rational maps conjugate to a polynomial having everywhere good reduction. We remark that in \cite{HindrySilverman}, Hindry and Silverman prove that the full statement of Lang's Conjecture follows from the $abc$-Conjecture. One might wonder whether a similar assumption would give a proof of Conjecture \ref{conj:mincanht} for polynomials.
	
	The strategy behind the proof of Theorem \ref{thm:minht} draws its inspiration from the central ideas of \cite{Ingram2}, which concerns the map $\phi(z)=z^d+c$. Ingram shows that if many iterates $\phi^i(\alpha)$ have small local height at some given valuation $v$, then any pre-image of these $\phi^i(\alpha)$ under $\phi$ must be $v$-adically very close to some root of $\phi$. Moreover, the degree of closeness increases suitably as the local height of $c$ grows. It follows from the pigeon-hole principle that some explicit proportion of the $\phi^i(\alpha)$ must be very close together in the $v$-adic metric, in a way reminiscent of Lemmas 3 and 4 of \cite{Silverman2}. The product formula then yields a lower bound on the global canonical height.
	
	For degree $d$ polynomials not of the form $z^d+c$, this approach fails: one cannot conclude that all pre-images of low local height iterates $\phi^i(\alpha)$ lie close to a root of $\phi$. Instead, working with representatives of $\langle\phi\rangle\in\mathcal{M}_d$ in a certain normal form $f_\textbf{c}$, any pre-image under $f_\textbf{c}^3$ of a low local height point $f_\textbf{c}^i(\alpha)$ necessarily lies near some root of $f_\textbf{c}^3$. The key to this argument is an analysis of the equipotential curves of the local Green's functions of $f_\textbf{c}$. Assuming at least some critical point of $f_\textbf{c}$ has $v$-adically unbounded orbit, the level sets corresponding to points in the grand orbit of the fastest escaping critical point form the boundaries of adjacent annuli. These sequences of annuli are endowed with a natural a tree structure, which was analyzed in depth in \cite{BrannerHubbard}, \cite{DeMarco}, and \cite{DM}. By tracking how the moduli of the annuli grow as the local height of $f_\textbf{c}$ grows, we argue that any $f_\textbf{c}^i(\alpha)$ with low local height at $v$ must have some pre-image under $f_\textbf{c}^3$ that is $v$-adically close to a root of $f_\textbf{c}^3$. We are able to implement this idea in both the archimedean and non-archimedean settings, allowing us to incorporate non-archimedean places of bad reduction in the statement of Theorem \ref{thm:minht}.
	\newline
	
	\indent\textbf{Acknowledgements.} I would like to thank Laura DeMarco for introducing me to the trees in \cite{DeMarco} and \cite{DM}, and for  many helpful and enlightening discussions on that topic. I would also like to thank Patrick Ingram for several fruitful conversations on this research problem, and for sharing his results concerning the normal form used throughout this article. Finally, it is a pleasure to thank Laura DeMarco and Joseph Silverman for their useful comments on an earlier draft.

\section{Background}

Let $\textup{Rat}_d$ be the set of degree $d$ rational maps $\phi:\mP^1(\C)\to\mP^1(\C)$, where $d\ge 2$. Under its natural embedding into $\mathbb{P}^{2d+1}(\C)$, it is the complement of the hypersurface given by the resultant locus, and hence has the structure of an affine variety. The action of conjugation by M\"{o}bius transformations over $\C$ determines a quotient map \[\pi:\textup{Rat}_d\to\mathcal{M}_d,\] where $\mathcal{M}_d$ has the structure of an affine variety defined over $\Q$, and $\pi$ can be defined over $\Q$ \cite[\S4.4]{Silverman3}. The space $\mathcal{M}_d$ can thus be viewed as a moduli space of degree $d$ rational maps of $\mP^1(\C)$. Embedding the variety $\mathcal{M}_d$ into projective space $\mP^N$ for a suitable $N$ yields an associated height function $h_{\mathcal{M}_d}$.

For a number field $K$, let $M_K$ denote the set of places of $K$, each giving rise to a distinct absolute value, and normalized so that each $v$ restricts to the usual $v$-adic absolute value on $\Q$. Fix an embedding of $\overline{\Q}$ into $\C$, and let $v_0\in M_K$ be the corresponding valuation. Let $M_K^{\infty}$ denote the archimedean places in $M_K$, and let $M_K^0$ denote the non-archimedean places. If $\phi(z)=a_dz^d+a_{d-1}z^{d-1}+\cdots+a_1z+a_0$ and $v\in M_K^0$, then we say $\phi$ has \textit{bad reduction at} $v$ if either $|a_d|_v\ne 1$, or $|a_i|_v>1$ for some $0\le i\le d-1$ (see \cite[\S2.5]{Silverman3} for a definition that applies to rational maps). For $v\in M_K$, let $\CC_v$ denote the completion of an algebraic closure of $K_v$. For $\phi(z)\in\CC_v[z]$ and $z\in\C_v$, let \[G_{\phi,v}(z)=\lim_{n\to\infty} \fr{1}{d^n}\log\max\{1,|\phi^n(z)|_v\}\] be the standard $v$-adic escape-rate function, and let \[M_v(\phi)=\max\{G_{\phi,v}(c_i)\mid \phi'(c_i)=0\}.\] (See \S3.4, 3.5 of \cite{Silverman3} for a proof that the limit defining $G_{\phi,v}(z)$ exists.) When $v=v_0$, we will denote $M_{v_0}(\phi)$ by simply $M(\phi)$, and similarly for $G_{\phi,v_0}$. When $v$ is archimedean, the escape-rate function is harmonic wherever it is non-zero; consequently, the level sets corresponding to non-zero values are finite unions of closed curves \cite{Milnor}. Following convention, we will use the alternate notation $\hat{\lambda}_v$ instead of $G_{\phi,v}$ whenever $\hat{\lambda}_v=G_{\phi,v}$ is being referred to in its role as a local height function. Note that $G_{\phi,v}(z)$ obeys the transformation rule \begin{equation}{\label{eqn:transformation}}G_{\phi,v}(\phi(z))=dG_{\phi,v}(z)\end{equation} for all $z\in \CC_v$.

If $\phi\in K[z]$, then $\phi$ has good reduction at $v\in M_K^0$ only if $\hat{\lambda}_v(\alpha)=\log\max\{1,|\alpha|_v\}$ for all $\alpha \in K$ \cite[Proposition 1.4]{Benedetto2}. This fact will be used in the proof of Lemma \ref{lem:nonsuppd}. The \textit{canonical height} $\hat{h}_\phi(\alpha)$ of $\alpha\in K$ is defined as \[\hat{h}_\phi(\alpha)=\lim_{n\to\infty}\fr{1}{d^n}h(\phi^n(\alpha)).\] It can also be expressed as \[\hat{h}_\phi(\alpha)=\fr{1}{[K:\Q]}\sum_{v\in M_K} n_v\hat{\lambda}_v(\alpha),\] where $n_v=[K_v:\Q_v]$. The canonical height is invariant under conjugation: in other words, if $\phi=\mu^{-1}\circ \psi\circ\mu$ for a M\"{o}bius transformation $\mu\in \overline{\Q}(z)$, then $\hat{h}_\phi(\alpha)=\hat{h}_\psi(\mu^{-1}(\alpha))$.

\begin{lem}\cite[Lemma 2.1]{Ingram1}{\label{lem:Ingram1}} Let $v\in M_K$, let $d\ge2$, and let \[\phi(z)=a_dz^d+a_{d-1}z^{d-1}+\dots+a_1z+a_0\in K[z].\] If $z\in\C_v$ satisfies \[|z|_v>C_{\phi,v}:=|2d|_v\max_{0\le i\le d}\left\{1,\left|\fr{a_i}{a_d}\right|_v^{1/(d-i)},|a_d|_v^{-1/(d-1)}\right\},\] then \[G_{\phi,v}(z)=\log|z|_v+\fr{1}{d-1}\log|a_d|_v+\epsilon(\phi,v,z)\] where \[-\log2\le \epsilon(\phi,v,z)\le \log\dfrac{3}{2}\] if $v\in M_K^{\infty}$, and $\epsilon(\phi,v,z)=0$ if $v\in M_K^0$. \end{lem}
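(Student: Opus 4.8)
The plan is to run a standard telescoping/comparison argument on the partial sums defining $G_{\phi,v}(z)$, using the hypothesis $|z|_v > C_{\phi,v}$ to guarantee that the orbit of $z$ stays in the region where $\phi$ behaves like its leading term. First I would observe that for $|z|_v$ large, $|\phi(z)|_v$ is comparable to $|a_d z^d|_v = |a_d|_v |z|_v^d$: writing $\phi(z) = a_d z^d(1 + \sum_{i<d} (a_i/a_d) z^{i-d})$, the condition $|z|_v > C_{\phi,v} \ge |2d|_v |a_i/a_d|_v^{1/(d-i)}$ forces each term $|(a_i/a_d) z^{i-d}|_v$ to be small — in the non-archimedean case strictly less than $1$ (indeed less than $|2d|_v^{i-d}\le 1$), so that the ultrametric inequality gives $|\phi(z)|_v = |a_d|_v |z|_v^d$ exactly; in the archimedean case their sum is bounded by something like $1/2$, giving $\tfrac12 |a_d|_v|z|_v^d \le |\phi(z)|_v \le \tfrac32 |a_d|_v|z|_v^d$. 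The second thing to check is that $|\phi(z)|_v > C_{\phi,v}$ again, so the estimate propagates along the whole forward orbit; this is where the term $|a_d|_v^{-1/(d-1)}$ in the definition of $C_{\phi,v}$ is needed, since one wants $|a_d|_v|z|_v^d \ge |z|_v \cdot (\text{something} \ge 1)$, equivalently $|z|_v^{d-1} \ge |a_d|_v^{-1}$, which follows from $|z|_v > |a_d|_v^{-1/(d-1)}$ (with the archimedean factor $\tfrac12$ absorbed by $|2d|_v$).

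Granting the orbit stays in the good region, I would set $u_n = \tfrac{1}{d^n}\log\max\{1,|\phi^n(z)|_v\} = \tfrac1{d^n}\log|\phi^n(z)|_v$ and estimate the telescoping difference $u_{n+1} - u_n = \tfrac{1}{d^{n+1}}\bigl(\log|\phi^{n+1}(z)|_v - d\log|\phi^n(z)|_v\bigr)$. By the comparison of $|\phi(w)|_v$ with $|a_d|_v|w|_v^d$ applied at $w = \phi^n(z)$, this difference equals $\tfrac{1}{d^{n+1}}\log|a_d|_v$ in the non-archimedean case, and lies within $\tfrac{1}{d^{n+1}}(\log|a_d|_v - \log 2)$ and $\tfrac{1}{d^{n+1}}(\log|a_d|_v + \log\tfrac32)$ in the archimedean case. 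Summing from $n=0$ to $\infty$ and using $\sum_{n\ge1} d^{-n} = \tfrac{1}{d-1}$, I get $G_{\phi,v}(z) = u_0 + \sum_{n\ge0}(u_{n+1}-u_n) = \log|z|_v + \tfrac{1}{d-1}\log|a_d|_v + \epsilon(\phi,v,z)$, where $\epsilon = 0$ non-archimedean, and $\epsilon \in [-\log 2, \log\tfrac32]$ archimedean, the bounds on $\epsilon$ coming from summing the geometric series of the one-step error bounds (note $\sum_{n\ge0} d^{-n-1} \le 1$, so the per-step bounds $-\log 2$ and $\log\tfrac32$ carry over to the total without worsening).

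The main obstacle — really the only place requiring care — is the bookkeeping of the archimedean multiplicative constants: one must check that the factor $|2d|_v$ in $C_{\phi,v}$ is generous enough simultaneously to (a) make $\sum_{i<d}|(a_i/a_d)z^{i-d}|_v \le \tfrac12$ (so the lower bound $|\phi(z)|_v \ge \tfrac12|a_d|_v|z|_v^d$ holds and the log-error is at least $-\log 2$), (b) keep the upper error at most $\log\tfrac32$, and (c) ensure $|\phi(z)|_v$ stays above $C_{\phi,v}$ so the induction closes. Each of these reduces to an elementary inequality in $|z|_v$, $|a_d|_v$, and the ratios $|a_i/a_d|_v$; I would verify them one at a time, using that there are at most $d$ terms in the sum and that $|z|_v/C_{\phi,v} > 1$ gives geometric decay in the tail. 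Since this is quoted from \cite{Ingram1}, I would either cite the computation there or relegate the constant-chasing to a short remark.
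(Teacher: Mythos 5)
The paper gives no proof of this lemma; it is quoted directly from Ingram \cite[Lemma~2.1]{Ingram1}, so there is nothing in the paper itself to compare against. Your telescoping reconstruction is the standard argument and is the one Ingram runs. The archimedean bookkeeping you sketch is sound: with $|z|_v>C_{\phi,v}$ each term $|(a_i/a_d)z^{i-d}|_v<(2d)^{i-d}\le(2d)^{-1}$, so the tail sum over the $d$ lower-order coefficients is $<1/2$ and $|\phi(z)|_v/(|a_d|_v|z|_v^d)\in[1/2,3/2]$; the per-step errors are weighted by $d^{-(n+1)}$, and since $\sum_{n\ge0}d^{-(n+1)}=\tfrac{1}{d-1}\le1$ the total error indeed lands in $[-\log 2,\log\tfrac32]$. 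You also correctly locate why $|a_d|_v^{-1/(d-1)}$ appears, namely to close the induction $|\phi(z)|_v>C_{\phi,v}$.

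There is, however, a sign error in your non-archimedean paragraph, and it is worth naming because it points at a genuine subtlety in the statement as transcribed. You assert ``indeed less than $|2d|_v^{i-d}\le 1$,'' but $|2d|_v^{i-d}=|2d|_v^{-(d-i)}$ is $\ge 1$, not $\le 1$, whenever $|2d|_v<1$, i.e.\ whenever $v$ divides $2d$. At such a place the hypothesis $|z|_v>C_{\phi,v}=|2d|_v\max\{\cdots\}$ does not force the lower-order terms to be $v$-adically small, and in fact the conclusion fails: for $\phi(z)=z^d$, $v\mid 2d$, and $|2d|_v<|z|_v<1$ one has $G_{\phi,v}(z)=0$ while $\log|z|_v+\tfrac{1}{d-1}\log|a_d|_v=\log|z|_v<0$. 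The fix is that the prefactor should be $\max\{1,|2d|_v\}$ rather than $|2d|_v$ (equivalently, the issue is invisible at every non-archimedean $v\nmid 2d$, which is all but finitely many places, and the remaining finitely many are absorbed into the $v$-dependent constants $\xi_v,\delta_v$ elsewhere in the paper). Away from $v\mid 2d$ your argument is complete; you should restrict the non-archimedean claim $\epsilon=0$ to those $v$, or adjust $C_{\phi,v}$ accordingly.
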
 In \cite{Ingram1}, Ingram relates $C_{\phi,v}$ to $M_v(\phi)$ for polynomials $\phi\in\overline{\Q}[z]$ in a particular normal form, which we now introduce. For $\textbf{c}=(c_1,\dots,c_{d-1})\in\mathbb{A}^{d-1}(\overline{\Q})$ and $d\ge 2$, set \[f_{\textbf{c}}(z)=\fr{1}{d}z^d-\fr{1}{d-1}(c_1+\dots+c_{d-1})z^{d-1}+\dots+(-1)^{d-1}c_1c_2\cdots c_{d-1}z,\] so that \[f_{\textbf{c}}'(z)=\prod_{i=1}^{d-1}(z-c_i).\] 

\begin{lem}\cite{Ingram1}{\label{lem:Ingram2}} Let $C_{f_\textbf{c},v}$ be as in Lemma \ref{lem:Ingram1}. There is a constant $\xi_v$ depending only on $d$ and on $v$ such that \[\log C_{f_{\textbf{c}},v}\le M_v(f_{\textbf{c}})+\xi_v\] for all $f_{\textbf{c}}$. Moreover, $\xi_v=0$ for all but finitely many $v\in M_K$.
	
\end{lem}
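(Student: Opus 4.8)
The plan is to bound $\log C_{f_{\textbf{c}},v}$ directly from its definition in terms of the coefficients of $f_{\textbf{c}}$, and then translate the resulting coefficient bound into a bound involving $M_v(f_{\textbf{c}})$ via the escape-rate function evaluated at the critical points $c_1,\dots,c_{d-1}$. First I would observe that for $f_{\textbf{c}}$ the leading coefficient is $a_d = 1/d$, so the term $|a_d|_v^{-1/(d-1)} = |d|_v^{1/(d-1)}$ contributes only a constant depending on $d$ and $v$, and $|2d|_v$ likewise. The remaining quantities $|a_i/a_d|_v^{1/(d-i)}$ are, up to the constant factor $|d|_v^{1/(d-i)}$, the quantities $|b_i|_v^{1/(d-i)}$ where $b_i$ is (up to sign and a combinatorial constant depending only on $d$) the $i$-th elementary-symmetric-type expression in $c_1,\dots,c_{d-1}$ coming from expanding $\int \prod (z-c_j)\,dz$. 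Hence $\log C_{f_{\textbf{c}},v}$ is bounded above by $\max_j \log^+|c_j|_v$ plus a constant depending only on $d$ and $v$ (absorbing $|2d|_v$, the $|d|_v^{1/(d-i)}$ factors, and — at archimedean $v$ — the binomial coefficients; at non-archimedean $v$ the ultrametric inequality makes these combinatorial constants harmless, and they equal $1$ for all but finitely many $v$).

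Next I would relate $\max_j \log^+|c_j|_v$ to $M_v(f_{\textbf{c}}) = \max_j G_{f_{\textbf{c}},v}(c_j)$. This is where Lemma \ref{lem:Ingram1} is applied a second time, now with $z = c_j$ for the critical point $c_j$ of largest absolute value: if $|c_j|_v > C_{f_{\textbf{c}},v}$ then $G_{f_{\textbf{c}},v}(c_j) = \log|c_j|_v + \tfrac{1}{d-1}\log|1/d|_v + \epsilon$, so $\log|c_j|_v \le G_{f_{\textbf{c}},v}(c_j) + \tfrac{1}{d-1}\log|d|_v + \log 2 \le M_v(f_{\textbf{c}}) + (\text{const depending on }d,v)$; and if instead $|c_j|_v \le C_{f_{\textbf{c}},v}$, then since (from the first paragraph) $\log C_{f_{\textbf{c}},v} \le \max_j \log^+|c_j|_v + \gamma_v$ with $\gamma_v$ a constant, we would get $\log C_{f_{\textbf{c}},v} \le \log C_{f_{\textbf{c}},v} + \gamma_v$ — which is vacuous, so one must instead argue that the "small" case forces $\log C_{f_{\textbf{c}},v}$ itself to be at most the constant $\gamma_v' $ coming from the non-$c_j$ terms in the definition of $C_{f_{\textbf{c}},v}$ (namely $|2d|_v$ and $|d|_v^{1/(d-1)}$). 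Combining the two cases, $\log C_{f_{\textbf{c}},v} \le \max_j \log^+|c_j|_v + \gamma_v \le M_v(f_{\textbf{c}}) + \xi_v$ for a suitable $\xi_v$ depending only on $d$ and $v$.

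Finally, for the last assertion, I would check that $\xi_v$ was assembled entirely from $|2d|_v$, powers $|d|_v^{1/(d-i)}$, $\log 2$-type archimedean error terms from Lemma \ref{lem:Ingram1}, and binomial coefficients $\binom{d}{i}$ (or the analogous combinatorial constants from expanding $\prod(z-c_j)$). For a non-archimedean $v$ not dividing $2d$ and not among the finitely many archimedean places, every one of these has $v$-adic absolute value $1$ and the ultrametric inequality removes the error terms, so $\xi_v = 0$. The main obstacle I anticipate is the careful bookkeeping in the "small critical points" case of the second paragraph: one has to make sure that when no $c_j$ is large, $C_{f_{\textbf{c}},v}$ is genuinely controlled by an absolute constant rather than circularly by itself, which requires isolating precisely which terms in the maximum defining $C_{f_{\textbf{c}},v}$ can be large — only the ones of the form $|a_i/a_d|_v^{1/(d-i)}$, which are controlled by the $|c_j|_v$ — and verifying the constant $|2d|_v \cdot |d|_v^{1/(d-1)}$ is the right fallback bound.
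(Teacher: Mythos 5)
The paper's own ``proof'' of this lemma is a one-line citation to Lemmas 2.2 and 2.5 of Ingram's paper, so there is no argument in the paper to compare against directly; you are reconstructing Ingram's argument from scratch. Your first step (bounding $\log C_{f_{\textbf{c}},v}$ above by $\max_j \log^+|c_j|_v$ plus a constant depending only on $d$ and $v$, with that constant vanishing for all but finitely many $v$) is correct, and your handling of the case where the critical point of largest modulus satisfies $|c_j|_v > C_{f_{\textbf{c}},v}$ is also correct.

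However, there is a genuine gap in the complementary case, and it is exactly the one you flagged as ``the main obstacle.'' You assert that $\max_j |c_j|_v \le C_{f_{\textbf{c}},v}$ ``forces $\log C_{f_{\textbf{c}},v}$ itself to be at most the constant coming from the non-$c_j$ terms.'' This is false. The inequality $\max_j |c_j|_v \le C_{f_{\textbf{c}},v}$ is perfectly compatible with $\max_j |c_j|_v$ (and hence $C_{f_{\textbf{c}},v}$) being arbitrarily large, because $C_{f_{\textbf{c}},v}$ is itself roughly a constant multiple of $\max_j\max\{1,|c_j|_v\}$ rather than of the $|c_j|_v$ to some smaller power. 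Concretely, for $d=2$ and $v$ archimedean, $f_{\textbf{c}}(z)=\tfrac12 z^2 - cz$ has the single critical point $c$, and $C_{f_{\textbf{c}},v}=8\max\{1,|c|\}$, so $|c|\le C_{f_{\textbf{c}},v}$ for \emph{every} $c$, while $\log C_{f_{\textbf{c}},v}$ is unbounded. The lemma is still true here --- one can check directly that $G_{f_{\textbf{c}}}(c)=\log|c|+O(1)$ for large $|c|$ --- but not for the reason you give. What is missing is the unconditional estimate $\max_j\log^+|c_j|_v\le M_v(f_{\textbf{c}})+O(1)$: every critical point lies in the sublevel set $\{G_{f_{\textbf{c}},v}\le M_v(f_{\textbf{c}})\}$, which is a connected compact set of logarithmic capacity $|a_d|_v^{-1/(d-1)}e^{M_v(f_{\textbf{c}})}$, and one needs a capacity-versus-diameter bound (archimedean) or the structure of the non-archimedean B\"ottcher disk (non-archimedean) to conclude that this set, and so all the $c_j$, sits inside a disk of comparable radius. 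Alternatively one can iterate $f_{\textbf{c}}$ on a critical point until it leaves $\overline D(0,C_{f_{\textbf{c}},v})$ and carefully bound the number of iterates required, but as written your argument does not do this, and the two cases you distinguish do not cover the actual difficulty.
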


\begin{proof}
	The proof follows immediately from Lemmas 2.2 and 2.5 of \cite{Ingram1}.
\end{proof}

Finally, we show that any $\phi\in K[z]$ has a conjugate of the form $f_\textbf{c}$, with field of definition having degree bounded in terms of $d$ and $[K:\Q]$.

\begin{lem}{\label{lem:normalformconjugation}} Every $\phi\in\overline{\Q}[z]$ of degree $d\ge2$ is affine conjugate to a polynomial of the form $f_\textbf{c}$ for some $\textbf{c}\in\mathbb{A}^{d-1}(\overline{\Q})$. If $\phi$ is defined over a number field $K$, then $f_\textbf{c}$ is defined over a number field $L$ such that $[L:K]\le d(d-1)$. 
\end{lem}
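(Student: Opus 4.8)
The plan is to make the conjugating affine map $\mu(z)=\alpha z+\beta$ explicit and to see how the two features defining the normal form $f_{\textbf{c}}$ — leading coefficient $1/d$ and vanishing constant term — pin down $\alpha$ and $\beta$ separately. Write $\phi(z)=a_dz^d+\cdots+a_1z+a_0$ with $a_d\neq0$ and set $\psi:=\mu^{-1}\circ\phi\circ\mu$, where $\alpha\neq0$. A direct computation gives $\psi(z)=\alpha^{-1}\bigl(\phi(\alpha z+\beta)-\beta\bigr)$, so the leading coefficient of $\psi$ is $a_d\alpha^{d-1}$ and its constant term is $\alpha^{-1}\bigl(\phi(\beta)-\beta\bigr)$.

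First I would choose $\beta$ to be a fixed point of $\phi$; one exists because $\phi(z)-z$ has degree $d\geq1$, and it may be taken with $[K(\beta):K]\leq d$. Then, independently, I would take $\alpha$ to be any root of $X^{d-1}-\tfrac{1}{d\,a_d}$, which generates an extension of $K(\beta)$ — hence of $K$ — of degree at most $d-1$. With these choices $\psi$ has leading coefficient $1/d$ and no constant term. It remains to note that any $g(z)=\tfrac1d z^d+b_{d-1}z^{d-1}+\cdots+b_1z$ with $g(0)=0$ equals some $f_{\textbf{c}}$: its derivative is monic of degree $d-1$, so $g'(z)=\prod_{i=1}^{d-1}(z-c_i)$, and since $g(0)=0$ we get $g(z)=\int_0^z g'(t)\,dt=f_{\textbf{c}}(z)$ by comparing expansions with the formula defining $f_{\textbf{c}}$. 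Applying this to $g=\psi$ (taking $\alpha,\beta\in\overline{\Q}$) yields the first assertion, and setting $L=K(\alpha,\beta)$ produces a field over which $\mu$, and therefore $f_{\textbf{c}}=\mu^{-1}\circ\phi\circ\mu$, is defined, with $[L:K]\leq[K(\beta):K]\cdot[L:K(\beta)]\leq d(d-1)$.

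There is no real obstacle here; this is a bookkeeping lemma. The only points deserving care are that the two normalizations decouple — the constant-term condition involves only $\beta$ and the leading-coefficient condition only $\alpha$ — so that the two degree bounds simply multiply rather than interacting, and that one should not expect the critical points $c_i$ themselves to lie in $L$: the lemma asserts only that the coefficients of $f_{\textbf{c}}$ do, which is automatic once $\mu$ is defined over $L$.
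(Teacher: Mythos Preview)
Your proof is correct and follows essentially the same route as the paper: both pick $\beta$ (the paper writes $\gamma$) a fixed point of $\phi$ and $\alpha=(a_dd)^{-1/(d-1)}$, then observe that the resulting conjugate is characterized by having monic derivative and vanishing constant term, hence equals some $f_{\textbf{c}}$. Your write-up is a bit more explicit in verifying the characterization via integration and in tracking the degree bounds, but the argument is the same.
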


\begin{proof}
	Let $\phi(z)=a_dz^d+a_{d-1}z^{d-1}+\cdots+a_1z+a_0$, and let $\gamma$ be a fixed point of $\phi$. If $\mu=(a_dd)^{-1/(d-1)}z+\gamma$, then $g=\mu^{-1}\circ \phi\circ\mu$ is of the form $f_\textbf{c}$. Indeed, it is easy to check that the derivative $g'$ is monic, and that $g$ has a fixed point at $0$. But the form of $f_\textbf{c}$ is characterized by exactly these two properties. Finally, $f_\textbf{c}$ is defined over $L=K(\gamma,(a_dd)^{-1/(d-1)})$, and $[L:K]\le d(d-1)$.
\end{proof}

\section{Local height bounds: Archimedean case}

We remind the reader of standard facts from complex analysis, which will be invoked in the proof of Proposition \ref{prop:annuli}. By the Poincar\'{e}-Koebe uniformization theorem, any doubly connected domain in $\CC$ is biholomorphic to an open round annulus. We will thus refer to any such domain as an \textit{annulus}. If an annulus $A_1$ is biholomorphic to the bounded round annulus $A_2=\{z\in\C\mid r_1<|z|<r_2\}$, we define the $\textit{modulus}$ of $A_1$ (and $A_2$) to be \[\textup{mod}(A_1)=\fr{1}{2\pi}\log\left(\fr{r_2}{r_1}\right),\] where the modulus is infinite if $r_1=0$. Finally, we say an annulus $A_1$ is \textit{essentially embedded} in an annulus $A_2$ if $A_1\subset A_2$, and the bounded complementary component of $A_2$ is contained in the bounded complementary component of $A_1$.

\begin{lem}\cite[\S 9.3.5]{Remmert}{\label{lem:kto1annulus}} If $f:A_1\to A_2$ is a degree $k$ holomorphic covering map of annuli in $\C$, then \[\textup{mod}(A_1)=\dfrac{1}{k}\textup{mod}(A_2).\]
\end{lem}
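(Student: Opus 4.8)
The plan is to reduce the statement to the model case of round annuli and explicit power maps, exploiting the conformal invariance of the modulus together with the classification of covering spaces. First I would invoke the Poincar\'{e}--Koebe uniformization theorem to replace $A_2$ by a round annulus: fix a biholomorphism $\psi:A_2\to B_2$, where $B_2=\{z\in\C:1<|z|<R\}$ with $\frac{1}{2\pi}\log R=\textup{mod}(A_2)$ (if $\textup{mod}(A_2)=\infty$ take $B_2=\C^*$ or a punctured disc; the argument below is unchanged, with $\infty/k=\infty$). Since the modulus is a conformal invariant, it suffices to show that the $k$-sheeted holomorphic covering map $\psi\circ f:A_1\to B_2$ forces $\textup{mod}(A_1)=\frac{1}{k}\textup{mod}(A_2)$.

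Next I would identify this cover explicitly. The map $p_k:B_1\to B_2$, $p_k(z)=z^k$, with $B_1=\{z\in\C:1<|z|<R^{1/k}\}$, is a holomorphic covering map of degree $k$, and $\textup{mod}(B_1)=\frac{1}{2\pi}\log R^{1/k}=\frac{1}{k}\textup{mod}(A_2)$. Now both $p_k$ and $\psi\circ f$ are connected $k$-sheeted covers of $B_2$; since $\pi_1(B_2)\cong\Z$ has a unique subgroup of index $k$, namely $k\Z$, the classification of covering spaces yields a homeomorphism $h:A_1\to B_1$ with $p_k\circ h=\psi\circ f$. Because $p_k$ is a local biholomorphism, in suitable local charts $h=p_k^{-1}\circ(\psi\circ f)$, so $h$ is holomorphic; being a holomorphic bijection between Riemann surfaces, it is a biholomorphism. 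Hence $\textup{mod}(A_1)=\textup{mod}(B_1)=\frac{1}{k}\textup{mod}(A_2)$.

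For orientation I would record the concrete picture behind this: passing to universal covers via the exponential map identifies $B_2$ with the quotient of a horizontal strip of height $\log R$ by the translation $\zeta\mapsto\zeta+2\pi$, and its degree-$k$ cover with the quotient of the same strip by $\zeta\mapsto\zeta+2\pi k$; rescaling by $1/k$ turns the latter into a strip of height $\frac{1}{k}\log R$ modulo $\zeta\mapsto\zeta+2\pi$, which is exactly $B_1$. The only point needing genuine care --- and the closest thing to an obstacle --- is checking that the a priori purely topological comparison map $h$ is automatically conformal, which, as indicated, is immediate from the fact that holomorphic covering maps are local biholomorphisms; one also uses that a finite-degree cover of an annulus is again an annulus (rather than, say, a disc), which follows because the cover has cyclic fundamental group.
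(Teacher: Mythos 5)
The paper does not prove this lemma; it is quoted from Remmert (\S 9.3.5 of the cited book) and used as a black box, so there is no in-paper argument to compare against. Your proof is correct and is the standard one. You uniformize the target to a round annulus $B_2$ (legitimate because the modulus is a conformal invariant), compare the resulting cover of $B_2$ with the explicit model $p_k(z)=z^k$ from the round annulus $B_1$ of modulus $\frac{1}{k}\mathrm{mod}(A_2)$, invoke the classification of coverings together with the fact that $\pi_1(B_2)\cong\Z$ has a unique index-$k$ subgroup to produce a homeomorphism $h:A_1\to B_1$ intertwining the two covers, and correctly note that $h$ is automatically biholomorphic since locally $h=(p_k|_U)^{-1}\circ(\psi\circ f)$ with $p_k$ a local biholomorphism. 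The exponential/strip picture you sketch at the end is the same computation carried out in universal-cover coordinates and is essentially the content of Remmert's treatment. One small economy: your closing observation that a finite-degree cover of an annulus is again an annulus is not actually needed here, since the hypothesis already stipulates that $A_1$ is an annulus; it would only be required for the stronger assertion that \emph{every} connected finite cover of an annulus is an annulus.
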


\begin{lem}[\cite{Milnor}, Corollary B.6]{\label{lem:grotzsch}} If $A_1,A_2,\dots,A_m$ are disjoint essentially embedded subannuli of the annulus $A\subset\C$, then \[\textup{mod}(A_1)+\dots+\textup{mod}(A_m)\le\textup{mod}(A).\] \end{lem}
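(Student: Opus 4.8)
The plan is to run the classical length--area (Gr\"{o}tzsch) argument in the form that makes the disjointness hypothesis transparent, namely through the conformal modulus realized as an extremal length; this is essentially the proof in \cite{Milnor}. We may assume all the moduli that appear are finite, since otherwise the inequality is vacuous. First recall that for a round annulus $A'=\{1<|w|<R\}$ the number $\textup{mod}(A')=\tfrac{1}{2\pi}\log R$ equals the extremal length of the family $\Gamma_{A'}$ of rectifiable arcs in $A'$ joining its two boundary components: the conformal metric $|dw|/|w|$ assigns each such arc length at least $\log R$ and assigns $A'$ total area $2\pi\log R$, so the extremal length is at least $(\log R)^2/(2\pi\log R)=\tfrac{1}{2\pi}\log R$, and this metric is in fact extremal. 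By the uniformization theorem recalled above I would fix conformal isomorphisms $\phi_i\colon A_i\to\{1<|w|<R_i\}$ with $\tfrac{1}{2\pi}\log R_i=\textup{mod}(A_i)$, and regard $\textup{mod}(A)$ as the extremal length of the family $\Gamma_A$ of arcs in $A$ joining its two boundary components.

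The geometric input is that, because each $A_i$ is essentially embedded in $A$, it separates the two boundary components of $A$; hence every $\gamma\in\Gamma_A$ crosses $A_i$, and $\gamma\cap A_i$ contains a subarc $\gamma_i$ joining the two boundary components of $A_i$. Since the $A_i$ are pairwise disjoint, so are the arcs $\gamma_i$. I would then define a nonnegative Borel metric $\rho$ on $A$ by $\rho(z)=|\phi_i'(z)|/|\phi_i(z)|$ for $z\in A_i$ and $\rho(z)=0$ otherwise, so that on each $A_i$ the differential $\rho\,|dz|$ is the $\phi_i$-pullback of $|dw|/|w|$. For every $\gamma\in\Gamma_A$ this gives $\int_\gamma\rho\,|dz|\ge\sum_{i=1}^m\int_{\gamma_i}\rho\,|dz|\ge\sum_{i=1}^m\log R_i=2\pi\sum_{i=1}^m\textup{mod}(A_i)$, while the total $\rho$-area of $A$ equals $\sum_{i=1}^m 2\pi\log R_i=4\pi^2\sum_{i=1}^m\textup{mod}(A_i)$. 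The length--area lower bound for extremal length then yields
\[\textup{mod}(A)\ \ge\ \frac{\bigl(2\pi\sum_{i=1}^m\textup{mod}(A_i)\bigr)^2}{4\pi^2\sum_{i=1}^m\textup{mod}(A_i)}\ =\ \sum_{i=1}^m\textup{mod}(A_i),\]
which is the assertion. (Attaching positive weights $t_i$ to the blocks and optimizing via Cauchy--Schwarz produces the same bound, the unweighted choice already being optimal.)

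The one step requiring genuine care is the topological assertion that an essentially embedded subannulus really separates the two ends of $A$, so that an arc of $\Gamma_A$ contains a connecting subarc inside each $A_i$; this is precisely where the definition of \emph{essentially embedded}---the bounded complementary component of $A$ lies inside that of each $A_i$---enters, and along the way one verifies that disjoint essentially embedded subannuli of $A$ are linearly ordered by inclusion of their bounded complementary components. The remaining points (Borel measurability of $\rho$, additivity of length over the disjoint subarcs $\gamma_i$, and the exact constants) are routine. An alternative that bypasses extremal length is to transport everything to a round model of $A$ and estimate the Dirichlet energy of $\log|z|$ restricted to the images of the $A_i$, but the extremal-length formulation is cleaner and matches the cited reference.
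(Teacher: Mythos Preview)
The paper does not give its own proof of this lemma; it is simply quoted from \cite{Milnor}, Corollary~B.6, and used as a black box. Your extremal-length/length--area argument is correct and is exactly the standard proof (and essentially the one in the cited reference): pull back the extremal metric $|dw|/|w|$ from a round model of each $A_i$, observe that essential embedding forces every arc joining the boundary components of $A$ to cross each $A_i$, and apply the length--area inequality. There is nothing to compare against in the paper itself, and your write-up would serve perfectly well as a self-contained substitute for the citation.
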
 We now use these properties of annuli and holomorphic covering maps between them to prove a key proposition.

\begin{prop}{\label{prop:annuli}} Let $\phi\in\C[z]$ be of degree $d\ge2$, and assume $M=M(\phi)>0$. Let $\mathcal{C}_2$ be a connected component of the level set $G_\phi(z)=M/d^2$. Then the annulus bounded by $\mathcal{C}_2$ and the curve $G_\phi(z)=dM$ has modulus at least $\fr{1}{2\pi}(d+\fr{1}{d-1})M$.\end{prop}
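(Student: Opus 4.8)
The plan is to exhibit three pairwise disjoint, essentially embedded subannuli of the target annulus $A$ (bounded by $\mathcal{C}_2$ at level $G_\phi=M/d^2$ and by $\{G_\phi=dM\}$) and to sum their moduli with the Gr\"{o}tzsch inequality, Lemma \ref{lem:grotzsch}. Let $V_2$ be the component of $\{G_\phi<M/d^2\}$ enclosed by $\mathcal{C}_2$, and let $V_1\supseteq V_2$ and $V_0\supseteq V_1$ be the components of $\{G_\phi<M/d\}$ and $\{G_\phi<M\}$ containing $V_2$. Since $dM>M=M(\phi)$, the set $U:=\{G_\phi<dM\}$ is a single topological disk; likewise each of $V_0,V_1,V_2$ is a topological disk (a component of a sublevel set of $G_\phi$ is simply connected because $\{G_\phi\ge L\}\cup\{\infty\}$ is connected for every $L>0$). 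Thus $V_2\subset V_1\subset V_0\subset U$ is a nest of topological disks, $A=U\setminus\overline{V_2}$, and by the ordering $M/d^2<M/d<M<dM$ of their defining levels the three annuli $B_0:=U\setminus\overline{V_0}$, $B_1:=V_0\setminus\overline{V_1}$, $B_2:=V_1\setminus\overline{V_2}$ are pairwise disjoint and each essentially embedded in $A$.

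For $B_0$, the B\"{o}ttcher coordinate of $\phi$ at $\infty$ (\cite{Milnor}) identifies $\mathcal{A}:=\{M<G_\phi<dM\}$ with a round annulus of modulus $\tfrac1{2\pi}(dM-M)=\tfrac{(d-1)M}{2\pi}$, and since $\overline{V_0}\subseteq\{G_\phi\le M\}$ this $\mathcal{A}$ is essentially embedded in $B_0$, so $\operatorname{mod}(B_0)\ge\tfrac{(d-1)M}{2\pi}$. For $B_1,B_2$ the crucial input is: \emph{$\phi$ restricted to any connected component of $\{G_\phi<M\}$ is proper onto $U$ of degree at most $d-1$}. Indeed $\phi^{-1}(U)=\{G_\phi<M\}$, the degrees of $\phi$ on its components sum to $d$, and Riemann--Hurwitz (source and target simply connected) shows the total ramification of $\phi$ inside $\{G_\phi<M\}$ equals $d$ minus the number of those components; since the fastest critical point $c^*$ lies on $\{G_\phi=M\}$ and hence \emph{outside} $\{G_\phi<M\}$, its ramification (at least $1$) is missing from that count, forcing at least two components and so degree $\le d-1$ for each. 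The identical count applied to the components of $\{G_\phi<M/d\}$ over a fixed component of $\{G_\phi<M\}$ --- $c^*$ is again absent, as $\phi(c^*)$ has level $dM$ --- gives $\deg(\phi|_{V_1})\le d-1$. Hence $\phi|_{V_0}\colon V_0\to U$ is proper of degree $k_0\le d-1$ and $\phi^2|_{V_1}\colon V_1\to U$ is proper of degree $k\le(d-1)^2$, with $\phi(\partial V_1),\phi^2(\partial V_2)\subseteq\{G_\phi=M\}$ and $\phi(\partial V_0),\phi^2(\partial V_1)\subseteq\{G_\phi=dM\}$.

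Now pull $\mathcal{A}$ back. The set $\phi^{-1}(\mathcal{A})\cap V_0=\{M/d<G_\phi<M\}\cap V_0$ lies in $B_1$, covers $\mathcal{A}$ properly with degree $k_0$ (its image boundaries being the two circles $\{G_\phi=M\}$ and $\{G_\phi=dM\}$ of $\mathcal{A}$), and, sitting at levels strictly between those of $\partial V_1$ and $\partial V_0$, is crossed by every arc in $B_1$ joining $\partial V_1$ to $\partial V_0$. It need not be an annulus --- other components of $\{G_\phi<M/d\}$ inside $V_0$ can intervene, so Lemma \ref{lem:kto1annulus} does not apply verbatim --- but a length--area estimate does: pulling the extremal metric of $\mathcal{A}$ back by $\phi$, each such arc acquires length at least that of a crossing arc of $\mathcal{A}$ while the total pulled-back area is $k_0$ times that of $\mathcal{A}$, whence $\operatorname{mod}(B_1)\ge\tfrac1{k_0}\operatorname{mod}(\mathcal{A})\ge\tfrac{M}{2\pi}$. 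The same argument with $\phi^2$ in place of $\phi$ and $V_1$ in place of $V_0$ gives $\operatorname{mod}(B_2)\ge\tfrac1k\operatorname{mod}(\mathcal{A})\ge\tfrac{M}{(d-1)2\pi}$. Summing with Lemma \ref{lem:grotzsch},
\[\operatorname{mod}(A)\ \ge\ \frac{(d-1)M}{2\pi}+\frac{M}{2\pi}+\frac{M}{(d-1)\,2\pi}\ =\ \frac1{2\pi}\Bigl(d+\frac1{d-1}\Bigr)M.\]

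The main obstacle is the pull-back step for $B_1$ and $B_2$: since $\phi$ may branch inside $V_0$ and $V_1$, the preimages of $\mathcal{A}$ are in general planar domains with several holes rather than single annuli, so one must replace the covering formula of Lemma \ref{lem:kto1annulus} by the extremal-length estimate above and track the ramification carefully --- this is exactly the bookkeeping of the tree-of-annuli analyses of \cite{BrannerHubbard}, \cite{DeMarco}, \cite{DM}, and the fact that the three contributions add up \emph{precisely} to $\tfrac1{2\pi}(d+\tfrac1{d-1})M$ shows there is essentially no slack. A minor point: $M/d$ and $M/d^2$ are critical values of $G_\phi$, so the level curves there may be singular; this is harmless once $\mathcal{C}_2$ is read as the boundary of the (still simply connected) component $V_2$ it encloses.
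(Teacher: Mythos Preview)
Your proof is correct, and the overall architecture matches the paper's exactly: both decompose the big annulus into the three nested pieces between the levels $dM$, $M$, $M/d$, $M/d^2$, identify the outermost piece with a round annulus of modulus $\tfrac{(d-1)M}{2\pi}$ via the B\"ottcher coordinate, bound the next two pieces from below by $\tfrac{M}{2\pi}$ and $\tfrac{M}{2\pi(d-1)}$ using that $\phi$ has degree at most $d-1$ on each relevant component, and then sum with Gr\"otzsch.

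The one genuine difference is in how the modulus bounds for $B_1$ and $B_2$ are extracted from the degree bound. The paper first slices $A_0=\{M<G_\phi<dM\}$ along the grand-orbit critical levels into \emph{fundamental subannuli} $A_{0,i}$; since none of these contains a critical value of $\phi$ or $\phi^2$, each has a genuine annular preimage $A_{2,i}$ essentially embedded in $A_2=B_2$, and Lemma~\ref{lem:kto1annulus} applies verbatim to give $\operatorname{mod}(A_{2,i})\ge\tfrac{1}{(d-1)^2}\operatorname{mod}(A_{0,i})$, after which one Gr\"otzsch-sums over $i$. You instead bypass this slicing entirely with a direct length--area estimate: pull back the extremal metric of $\mathcal{A}$ by $\phi$ (resp.\ $\phi^2$), observe that any crossing arc of $B_1$ (resp.\ $B_2$) pushes forward to a curve joining the two boundaries of $\mathcal{A}$ and hence has length at least $(d-1)M$, while the pulled-back area is at most $k_0$ (resp.\ $k$) times that of $\mathcal{A}$. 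Your route is more elementary in that it avoids the fundamental-subannulus combinatorics of \cite{BrannerHubbard,DeMarco,DM} and never needs the preimage to be an honest annulus; the paper's route has the advantage of staying within the clean covering formula of Lemma~\ref{lem:kto1annulus} and making the connection to the tree-of-annuli picture explicit.
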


\begin{proof} On the set $\{z\in\C\mid G_\phi(z)>M\}$, the function $G_\phi(z)$ equals $\log|\tau_\phi|$, where $\tau_\phi$ is the B\"{o}ttcher coordinate near $\infty$. Therefore $\tau_\phi$ takes the region $M<G_\phi(z)<dM$ to the region $M<\log|z|<dM$. As $\tau_\phi$ is an isomorphism, the annulus $A_0$ bounded by the curves $G_\phi(z)=dM$ and $G_\phi(z)=M$ has modulus $\fr{1}{2\pi}(d-1)M$. Removing from $A_0$ the level curves of any elements in the grand orbit of a critical point of $\phi$, we obtain disjoint \textit{fundamental subannuli} $A_{0,1},\dots,A_{0,j}$ of $A_0$. Again using the B\"{o}ttcher coordinate, we observe that \begin{equation}{\label{eqn:equalityA1}}\sum_{i=1}^j\textup{mod}(A_{0,i})=\textup{mod}(A_0).\end{equation} Now let $\mathcal{C}_2$ be any connected component of the level curve $G_\phi(z)=M/d^2$, and let $\mathcal{C}_1$ be the unique connected component of the level curve $G_\phi(z)=M/d$ which bounds a region in $\C$ containing $\mathcal{C}_2$. Let $A_2$ be the annulus with boundaries $\mathcal{C}_1$ and $\mathcal{C}_2$. Within $A_2$, there is a collection $A_{2,1},\dots,A_{2,j}$ of essentially embedded subannuli, where for each $1\le i\le j$, $A_{2,i}$ is mapped by $\phi^2$ onto the fundamental subannuli $A_{0,1},\dots,A_{0,j}$ respectively. By Lemma \ref{lem:kto1annulus}, we have \begin{equation}{\label{eqn:modA3}}\textup{mod}(A_{2,i})\ge \fr{1}{(d-1)^2}\textup{mod}(A_{0,i})\end{equation} for all $1\le i\le j$. Combining (\ref{eqn:equalityA1}) with (\ref{eqn:modA3}) and applying Lemma \ref{lem:grotzsch}, we get \[\textup{mod}(A_2)\ge \fr{1}{2\pi}\fr{M}{d-1}.\] A similar argument shows that $\textup{mod}(\phi(A_2))\ge \fr{1}{2\pi}M$. We conclude that the modulus of the annulus bounded by $G_\phi(z)=dM$ and $\mathcal{C}_2$ is at least $\fr{1}{2\pi}\left((d-1)+1+\fr{1}{d-1}\right)M=\fr{1}{2\pi}\left(d+\fr{1}{d-1}\right)M$.

\end{proof}

Proposition \ref{prop:annuli} implies a corollary about pre-images of low height points under $f_\textbf{c}^3$. We first make the following definition.

\begin{definition*} For $C_{f_\textbf{c},v}$ as in Lemma \ref{lem:Ingram1}, let \[B_{f_\textbf{c},v}(\infty)=\CC_v\backslash\overline{D}(0,C_{f_\textbf{c},v}),\] where $\overline{D}(0,C_{f_\textbf{c},v})=\{z\in\CC_v:|z|_v\le C_{f_\textbf{c},v}\}.$\end{definition*}

We use the notation $C_{f_\textbf{c}}$ and $B_{f_\textbf{c}}(\infty)$ when considering $v=v_0$, the restriction to $\overline{\Q}$ of the standard complex absolute value. (Recall that we have fixed an embedding of $\overline{\Q}$ into $\C$.)

\begin{cor}{\label{cor:preimagesmash}}
	Let $\alpha\in \C$, and let $d\ge2$. There exists a constant $\delta=\delta(d)$ depending only on $d$, so that for every $\textbf{c}\in\mathbb{A}^{d-1}(\overline{\Q})$, $\alpha\notin B_{f_\textbf{c}}(\infty)$ implies that every $y\in f_\textbf{c}^{-3}(\alpha)$ satisfies \[\min_{\beta\in f_\textbf{c}^{-3}(0)}\log|y-\beta|\le -\fr{1}{d-1}M(f_\textbf{c})+\delta.\] 
\end{cor}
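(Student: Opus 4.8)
The plan is to use Proposition \ref{prop:annuli} to control the modulus of a chain of annuli separating $\alpha$ from the equipotential $G_{f_\textbf{c}}(z) = dM$, and then to convert a large modulus into a metric estimate via the standard fact that the bounded complementary component of an annulus of large modulus has small diameter relative to the ``outer'' scale. First, note that by Lemma \ref{lem:Ingram1} and Lemma \ref{lem:Ingram2}, the region $B_{f_\textbf{c}}(\infty) = \C \setminus \overline{D}(0, C_{f_\textbf{c}})$ lies inside the locus where $G_{f_\textbf{c}}(z) = \log|z| + O(1)$, and $\log C_{f_\textbf{c}} \le M(f_\textbf{c}) + \xi$ for a $d$-dependent constant $\xi$; so if $\alpha \notin B_{f_\textbf{c}}(\infty)$ then $G_{f_\textbf{c}}(\alpha) \le M + O(1)$, i.e. $\alpha$ lies (essentially) inside the equipotential at level $M$. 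We may assume $M = M(f_\textbf{c}) > 0$, since otherwise the bound is trivial for a suitable $\delta$ (when $M = 0$ every point of $\C$ is within bounded distance of a root of $f_\textbf{c}^3$, by a compactness/normalization argument on the normal form).

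Next I would set up the pre-image picture. Let $y \in f_\textbf{c}^{-3}(\alpha)$. Since $G_{f_\textbf{c}}(\alpha) \le M$ (up to the harmless additive constant), the transformation rule $G_{f_\textbf{c}}(f_\textbf{c}(z)) = d\, G_{f_\textbf{c}}(z)$ gives $G_{f_\textbf{c}}(y) \le M/d^3$. Let $\mathcal{C}_2$ be the connected component of the level set $G_{f_\textbf{c}}(z) = M/d^2$ that encloses $y$ (such a component exists and is a closed curve by the harmonicity of $G_{f_\textbf{c}}$ off its zero set, cf.\ \cite{Milnor}); then $y$ lies in the bounded complementary component $U$ of $\mathcal{C}_2$. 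By Proposition \ref{prop:annuli}, the annulus $A$ bounded by $\mathcal{C}_2$ and the outer curve $G_{f_\textbf{c}}(z) = dM$ has $\textup{mod}(A) \ge \frac{1}{2\pi}(d + \frac{1}{d-1})M$. Now the outer curve $G_{f_\textbf{c}}(z) = dM$ is, in B\"{o}ttcher coordinates, the circle $|\tau_{f_\textbf{c}}| = e^{dM}$, and since $G_{f_\textbf{c}}(z) = \log|z| + \frac{1}{d-1}\log|a_d|_v + O(1)$ on that region with $a_d = 1/d$ in the normal form, the outer boundary has Euclidean diameter $\asymp_d e^{dM}$. A modulus lower bound for a round-like annulus translates (via the monotonicity of modulus under the Euclidean metric, or explicitly through the B\"{o}ttcher uniformization at the outer end together with the Gr\"{o}tzsch-type estimate already in play) into a diameter bound of the form $\textup{diam}(U) \le C(d)\, e^{dM} \cdot e^{-2\pi\, \textup{mod}(A)} \le C'(d)\, e^{dM} e^{-(d + 1/(d-1))M} = C'(d)\, e^{-M/(d-1)}$. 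Since both $y$ and every root $\beta$ of $f_\textbf{c}^3$ with $G_{f_\textbf{c}}(\beta) \le M/d^2$ lie in $U$ (the roots of $f_\textbf{c}^3$ are exactly the points mapped to $0$ by $f_\textbf{c}^3$, hence have $G_{f_\textbf{c}} = 0$, hence at least those in the right component lie inside $\mathcal{C}_2$), we get some $\beta \in f_\textbf{c}^{-3}(0)$ with $|y - \beta| \le \textup{diam}(U) \le C'(d)\, e^{-M/(d-1)}$, and taking $\log$ yields the claimed inequality with $\delta = \log C'(d)$ plus the accumulated $O(1)$ constants.

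The main obstacle, and the step requiring real care, is the passage from ``large modulus'' to ``small Euclidean diameter of the bounded complementary component'', made uniform in $\textbf{c}$. A large modulus only bounds the diameter of the inner component relative to the \emph{conformal} size of the configuration, so one must pin down the outer scale: this is exactly where the normal form $f_\textbf{c}$ and Lemma \ref{lem:Ingram1}--\ref{lem:Ingram2} are essential, giving $\log C_{f_\textbf{c}} \le M + O_d(1)$ and hence forcing the equipotential $G_{f_\textbf{c}} = dM$ to sit at Euclidean scale $e^{dM + O_d(1)}$. A second, related subtlety is ensuring the component $\mathcal{C}_2$ chosen around $y$ also encloses at least one point of $f_\textbf{c}^{-3}(0)$: this follows because $G_{f_\textbf{c}}$ is proper and its $M/d^2$-level set separates the filled Julia set (which contains all of $f_\textbf{c}^{-3}(0)$) into the pieces cut out by the fundamental subannuli, and $f_\textbf{c}^3$ maps the bounded component of $\mathcal{C}_2$ onto the bounded component of the level curve at level $M$, which contains $\alpha$ together with $0$; pulling back the point $0$ along the same branch of $f_\textbf{c}^{-3}$ that produced $y$ lands in $U$. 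Finally one must double-check the degenerate cases --- no critical point escaping to the component in question, or $\mathcal{C}_2$ not a single closed curve --- but these are handled by absorbing them into $\delta(d)$ exactly as in the proof of Proposition \ref{prop:annuli}.
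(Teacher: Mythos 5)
Your argument follows the paper's own proof step by step: Lemmas \ref{lem:Ingram1}--\ref{lem:Ingram2} to pin the equipotential $G_{f_\textbf{c}}=dM$ at Euclidean scale $e^{dM+O_d(1)}$, Proposition \ref{prop:annuli} for the modulus lower bound $\tfrac{1}{2\pi}(d+\tfrac{1}{d-1})M$, McMullen's fact that a large-modulus annulus contains an essentially embedded round annulus of comparable modulus to convert this to a diameter bound on the bounded complementary component, and the observation that $f_\textbf{c}^3$ maps that bounded component onto $D_2\ni 0$, so a root $\beta$ of $f_\textbf{c}^3$ lies inside. This is exactly the paper's strategy and the main estimate is correct.

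The one place where your write-up has a genuine gap is the degenerate regime. You reduce to ``$M>0$'' and dispose of $M=0$ by a compactness appeal, then in the final paragraph say the other degenerate cases are ``handled by absorbing them into $\delta(d)$ exactly as in the proof of Proposition \ref{prop:annuli}.'' But Proposition \ref{prop:annuli} requires $M>0$ and does not absorb anything into a constant, and more importantly the modulus-to-diameter conversion (the McMullen essentially-round-annulus fact) only applies once $\operatorname{mod}(A)$ exceeds some absolute threshold, which forces $M\ge m$ for a $d$-dependent threshold $m>0$, not merely $M>0$. So you still need a separate argument for $0\le M<m$. The clean way (and what the paper does) is to use Lemma \ref{lem:Ingram2} directly: $M<m$ gives $\log C_{f_\textbf{c}}\le m+\xi_{v_0}$, so $\alpha\notin B_{f_\textbf{c}}(\infty)$, its preimages $y$, and all roots $\beta\in f_\textbf{c}^{-3}(0)$ all lie in a disk of radius $e^{m+\xi_{v_0}+O_d(1)}$, whence $\log|y-\beta|\le\log2+m+\xi_{v_0}+O_d(1)$; since $-\tfrac{1}{d-1}M\ge -\tfrac{m}{d-1}$, this can be absorbed into $\delta(d)$. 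Your compactness remark for $M=0$ alone does not cover $0<M<m$ and is not quite a proof even at $M=0$, so you should replace it with the Lemma \ref{lem:Ingram2} estimate.
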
 \begin{proof} By Lemmas \ref{lem:Ingram1} and \ref{lem:Ingram2}, we see that for all $f_{\textbf{c}}$ with $M(f_\textbf{c})$ sufficiently large, \[\{z\in\CC\mid G_{f_\textbf{c}}(z)>dM(f_\textbf{c})\}\subset B_{f_\textbf{c}}(\infty).\] Lemma \ref{lem:Ingram1} then implies that for all $f_\textbf{c}$ with $M=M(f_\textbf{c})$ sufficiently large, $\{z\in\CC\mid G_{f_\textbf{c}}(z)\le dM(f_\textbf{c})\}$ is contained in a disk of radius comparable to $e^{dM(f_\textbf{c})}$. If $\alpha\notin B_{f_\textbf{c}}(\infty)$, then by (\ref{eqn:transformation}), any $y\in f_\textbf{c}^{-3}(\alpha)$ is contained in the closed region $D_1$ bounded by some connected component $\mathcal{C}_y$ of the curve $G_{f_\textbf{c}}(z)=M/d^2$. Let $D_2=\{z\in\C\mid G_{f_\textbf{c}}(z)\le dM\}$, and let $A(f_\textbf{c}^3)$ be the annulus in $\CC$ bounded by $G_{f_\textbf{c}}(z)=dM$ and $\mathcal{C}_y$. Every annulus $A\subset\CC$ of sufficiently large modulus contains an essentially embedded round annulus of modulus differing by at most a constant from the modulus of $A$ \cite{McMullen}. Applying Proposition \ref{prop:annuli}, it follows that for $M(f_\textbf{c})$ sufficiently large, there is an essentially embedded round annulus $A'(f_\textbf{c}^3)$ contained in $A(f_\textbf{c}^3)$ with outer boundary of radius comparable to $e^{dM(f_\textbf{c})}$, and modulus at least $\fr{1}{2\pi}(d+\fr{1}{d-1})M(f_\textbf{c})-O(1)$. We now observe that $f_\textbf{c}^3$ maps $D_1$ onto $D_2$, and that $0\in D_2$. From this, we conclude that there exist an $m$ and a $C$ such that for all degree $d$ polynomials $f_{\textbf{c}}\in\overline{\Q}[z]$ with $M(f_{\textbf{c}})\ge m$, and for all $\alpha\in\overline{\Q}$, $\alpha\notin B_{f_\textbf{c}}(\infty)$ implies that each $y\in f_\textbf{c}^{-3}(\alpha)$ satisfies \[|y-\beta|\le \dfrac{C}{\textup{exp}(\fr{1}{d-1}M(f_{\textbf{c}}))}\] for some root $\beta\in\overline{\Q}$ of $f_\textbf{c}^3$. On the other hand, if $M(f_\textbf{c})<m$, then by Lemma \ref{lem:Ingram2}, \[\log C_{f_\textbf{c}}\le m+\xi_{v_0},\] so by the triangle inequality, \[\log|y-\beta|<\log2+m+\xi_{v_0}\le \log2+\delta'.\] As $m$ and $\xi_{v_0}$ depend only on $d$, it follows that $\delta'$ can be chosen to depend only on $d$. \end{proof}

\section{Local height bounds: non-archimedean case}

For a number field $K$ and $v\in M_K^0$, recall that $\C_v$ denotes the completion of an algebraic closure $\overline{K_v}$ of $K_v$. A \textit{disk} in $\C_v$ is a set of the form $D=\{z\in\C_v\mid |z-a|_v<r\}$ for some $a\in\C_v$, where $r>0$. For such a disk $D$, we write $\overline{D}$ to denote the set $\{z\in\C_v\mid|z-a|_v\le r\}$. An \textit{annulus} in $\C_v$ is a set of the form $\{z\in\C_v\mid r_1<|z-a|_v<r_2\}$ for some $a\in\C_v$, and some non-negative $r_1<r_2$. We define the modulus of such an annulus to be $\fr{1}{2\pi}\log(r_2/r_1)$, by analogy with the archimedean case. We have the following proposition, which shows that moduli of annuli transform the same way under covering maps as in the archimedean setting.

\begin{prop}\cite[Corollary 2.6]{Baker}{\label{prop:nonarchmoduli}} If $v\in M_K^0$, and $\phi\in\C_v(z)$ is a degree $k$ covering map $\phi:A_1\to A_2$ of annuli in $\C_v$, then \[\textup{mod}(A_1)=\fr{1}{k}\textup{mod}A_2.\] \end{prop}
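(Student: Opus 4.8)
The cleanest route is to reduce to the case where both annuli are centred at the origin and then exploit the non-archimedean analogue of the Poisson--Jensen formula for the function $r\mapsto \|\phi\|_r:=\sup_{|z|_v=r}|\phi(z)|_v$. After a translation we may take $A_1=\{z\in\C_v:r_1<|z|_v<r_2\}$; translating in the target as well and absorbing that translation into $\phi$, we may also assume $A_2=\{w\in\C_v:s_1<|w|_v<s_2\}$, and then $\phi$ has neither zeros nor poles on $A_1$, since $\phi(A_1)=A_2$ avoids both $0$ and $\infty$. Expanding $\phi$ as a Laurent series convergent on $A_1$, standard non-archimedean analysis shows that $\log\|\phi\|_r$ is a piecewise-linear convex function of $\log r$ whose slope at radius $r$ is the number of zeros of $\phi$ minus the number of poles of $\phi$ in $\{|z|_v<r\}$, counted with multiplicity. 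As there are no zeros or poles in $A_1$, this slope is a constant integer $m$, so $\log\|\phi\|_r=m\log r+\textup{const}$ throughout $(\log r_1,\log r_2)$. In Berkovich language this is exactly the statement that $\phi$ carries the skeleton of $A_1^{\mathrm{an}}$ onto that of $A_2^{\mathrm{an}}$ by an affine map of integer slope.

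The next step is to identify $|m|$ with the covering degree $k$. Fix $w_0\in A_2$; then $\phi-w_0$ has the same poles as $\phi$, hence none in $A_1$, so $\log\|\phi-w_0\|_r$ is again piecewise linear in $\log r$ on $(\log r_1,\log r_2)$, equal to $\log|w_0|_v$ (slope $0$) on the subinterval where $|w_0|_v$ dominates and equal to $\log\|\phi\|_r$ (slope $m$) on the subinterval where $\|\phi\|_r$ dominates, the two regimes being separated by the unique radius at which $\|\phi\|_r=|w_0|_v$. Since the slope of $\log\|\phi-w_0\|_r$ increases as $r$ grows by precisely the number of zeros of $\phi-w_0$ enclosed, and the overall change is $|m|$, the map $\phi-w_0$ has exactly $|m|$ zeros in $A_1$. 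Because $\phi\colon A_1\to A_2$ is a degree-$k$ covering, each point of $A_2$ has exactly $k$ preimages in $A_1$, so $|m|=k$.

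Finally, evaluating at the two boundary radii gives
\[
\tfrac{1}{2\pi}\bigl|\log s_2-\log s_1\bigr|
=\tfrac{1}{2\pi}\bigl|\log\|\phi\|_{r_2}-\log\|\phi\|_{r_1}\bigr|
=\tfrac{|m|}{2\pi}\bigl|\log r_2-\log r_1\bigr|
=k\,\textup{mod}(A_1),
\]
that is, $\textup{mod}(A_1)=\tfrac1k\,\textup{mod}(A_2)$, as asserted. All of this is, of course, precisely the content of \cite[Corollary 2.6]{Baker}; the reader could alternatively appeal directly to the theory of reductions of rational maps and of skeleta of Berkovich annuli (as in the work of Rivera-Letelier, or Baker--Rumely).

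The expected main obstacle is the first part: making rigorous the non-archimedean Poisson--Jensen formula and the identification of the slope of $\log\|\phi\|_r$ with the zero-minus-pole count, together with the bookkeeping of which boundary circle of $A_1$ maps to which boundary circle of $A_2$ (the sign of $m$) and the fact that a covering map carries no ramification "transverse" to the skeleton. These ingredients are routine given the machinery of Berkovich analytic spaces or Rivera-Letelier's reduction theory, but they are the technical heart of the statement.
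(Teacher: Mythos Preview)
The paper does not give its own proof of this proposition; it is simply quoted as \cite[Corollary~2.6]{Baker} and used as a black box. Your sketch via the valuation polygon of $\phi$ on the annulus (equivalently, the action of $\phi$ on the Berkovich skeleton) is correct and is essentially the standard argument underlying the cited result, so there is nothing to compare against here beyond noting that you have supplied what the paper merely cites.

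One small point worth tightening in your write-up: in the final displayed computation you implicitly use that the endpoints of the interval $r\mapsto\|\phi\|_r$, $r\in(r_1,r_2)$, are exactly $s_1$ and $s_2$. This follows once you know $|m|=k\ge 1$ (so the map on skeleta is injective) together with surjectivity of $\phi$ onto $A_2$, but as written it is asserted before those facts are fully in hand; you flag this yourself in the closing paragraph as part of the ``bookkeeping,'' and indeed it is routine, but it would read more cleanly if you deduced the boundary correspondence from the preimage count rather than the other way around.
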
 We will also make use of the fact that pre-images of disks under polynomials behave nicely in the non-archimedean setting. \begin{prop}\cite[Lemma 2.7]{Benedetto}{\label{prop:nonarchdiskpreimage}} Let $D\subset\C_v$ be a disk, and let $\phi\in\C_v[z]$ be a polynomial of degree $d\ge 1$. Then $\phi^{-1}(D)$ is a disjoint union $D_1\cup\cdots\cup D_m$ of disks, with $1\le m\le d$. Moreover, for each $i=1,\dots,m$, there is an integer $1\le d_i\le d$ such that every point in $D$ has exactly $d_i$ pre-images in $D$, and that $d_1+\cdots+d_m=d$.
\end{prop}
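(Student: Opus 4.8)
The plan is to describe $\phi^{-1}(D)$ one maximal disk at a time, using two elementary facts about polynomials over the complete algebraically closed non-archimedean field $\C_v$, both provable via Newton polygons. Fix $z_0\in\C_v$ and expand $\phi(z_0+t)=\sum_{j=0}^{d}b_jt^j$, where $b_j=\phi^{(j)}(z_0)/j!$ and $b_d\ne 0$. (i) For every $\rho>0$, the polynomial $\phi$ maps the open disk $\{|t|_v<\rho\}$ \emph{onto} the open disk about $\phi(z_0)$ of radius $r'(\rho):=\max_{1\le j\le d}|b_j|_v\rho^j$. (ii) If $|w-\phi(z_0)|_v<r'(\rho)$, then the number of solutions $t$ of $\phi(z_0+t)=w$ with $|t|_v<\rho$, counted with multiplicity, depends only on $\rho$ and not on $w$; indeed changing $w$ within that disk only perturbs the constant term $b_0-w$, and it stays small enough not to disturb the portion of the Newton polygon that governs the roots of size less than $\rho$.

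Granting (i) and (ii), first I would pin down, for each $z_0\in\phi^{-1}(D)$ (so $|\phi(z_0)-a|_v<r$, writing $D=D(a,r)$), the largest open disk about $z_0$ lying in $\phi^{-1}(D)$. The function $\rho\mapsto r'(\rho)$ is continuous and strictly increasing, with $r'(0)=0$ and $r'(\rho)\to\infty$ as $\rho\to\infty$ (the term $|b_d|_v\rho^d$ dominates), so there is a unique $\rho_0=\rho_0(z_0)$ with $r'(\rho_0)=r$. For $\rho\le\rho_0$ the image $\phi(\{|t|_v<\rho\})$ has radius at most $r$ and contains $\phi(z_0)\in D$, hence lies inside $D$ by the ultrametric inequality; for $\rho>\rho_0$ it is too large. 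Thus $U_{z_0}:=\{z\in\C_v:|z-z_0|_v<\rho_0\}$ is the maximal disk about $z_0$ contained in $\phi^{-1}(D)$, and by (i) — using $|\phi(z_0)-a|_v<r=r'(\rho_0)$ — we get $\phi(U_{z_0})=D$; by (ii), applied at $\rho=\rho_0$ (note $|w-\phi(z_0)|_v<r$ for all $w\in D$), there is an integer $d_{z_0}$ with $1\le d_{z_0}\le d$ such that every $w\in D$ has exactly $d_{z_0}$ preimages in $U_{z_0}$, counted with multiplicity.

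Next I would globalize. If $\{|z-z_0|_v<\rho\}\subseteq\phi^{-1}(D)$ then $\phi$ sends it into $D$, which forces $r'(\rho)\le r$, i.e. $\rho\le\rho_0(z_0)$; so $U_{z_0}$ really is maximal, and two of the disks $U_{z_0}$ are either equal or disjoint (ultrametric disks that meet are nested, and maximality precludes proper nesting). Hence $\phi^{-1}(D)$ is a disjoint union of distinct disks of the form $U_{z_0}$. Choosing any single point $w_0\in D$, its $d$ preimages in $\C_v$ (counted with multiplicity) each lie in exactly one such disk, and the disk $U_{z_0}$ receives exactly $d_{z_0}$ of them; summing over the disks that actually occur gives $\sum_i d_i=d$. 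Since each $d_i\ge 1$, the number of disks is some $m\le d$, and we have the claimed decomposition $\phi^{-1}(D)=D_1\cup\cdots\cup D_m$ with $\phi|_{D_i}$ exactly $d_i$-to-one onto $D$ and $d_1+\cdots+d_m=d$.

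The one genuinely technical ingredient, and the step I expect to demand the most care, is fact (ii): that the multiplicity-counted fiber cardinality of $\phi$ over the disk $\phi(\{|t|_v<\rho_0\})$ does not change as the target moves within that disk. This is precisely the statement that a sufficiently small perturbation of the constant term of a polynomial over $\C_v$ does not change how many of its roots lie in a given small disk — a standard consequence of the theory of Newton polygons — while everything else is bookkeeping with the ultrametric inequality. (Alternatively, facts (i) and (ii) may simply be quoted from the non-archimedean function theory in \cite{Benedetto} or \cite{Baker}.)
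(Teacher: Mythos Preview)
The paper does not supply its own proof of this proposition; it is quoted verbatim as \cite[Lemma 2.7]{Benedetto} and used as a black box. Your argument is correct and is essentially the standard proof one finds in the non-archimedean literature: use the Newton polygon (equivalently, Weierstrass preparation) to show that a polynomial sends an open disk $\{|z-z_0|_v<\rho\}$ onto the open disk of radius $\max_{1\le j\le d}|b_j|_v\rho^j$ about $\phi(z_0)$, and that the fiber cardinality over that image disk is constant; then take, for each $z_0\in\phi^{-1}(D)$, the unique $\rho_0$ with image radius exactly $r$, observe that the resulting maximal disks $U_{z_0}$ are pairwise equal or disjoint by the ultrametric inequality, and count. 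The only point worth flagging is purely expository: when you assert that $U_{z_0}$ is \emph{the} maximal disk about $z_0$ inside $\phi^{-1}(D)$, you are implicitly using that in $\C_v$ every point of a disk is a center, so ``maximal disk centered at $z_0$'' and ``maximal disk containing $z_0$'' coincide; this is what makes the disjoint-or-equal step go through. Otherwise there is nothing to add --- your proof is complete, and since the paper has no proof of its own here, there is no alternative approach to compare against.
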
 Propositions \ref{prop:nonarchmoduli} and \ref{prop:nonarchdiskpreimage} imply a non-archimedean counterpart of Corollary \ref{cor:preimagesmash}.
\begin{prop}{\label{prop:badredpreimagesquash}} Let $v\in M_K^0$, let $\alpha \in \CC_v$, and let $d\ge 2$. There exists a constant $\delta_v$ depending only on $d$ and on $v$ such that for every $f_\textbf{c}(z)\in \overline{\Q}[z]$ of degree $d$, $\alpha\notin B_{f_\textbf{c},v}(\infty)$ implies that every $y\in f_\textbf{c}^{-3}(\alpha)$ satisfies \[\min_{\beta\in f_\textbf{c}^{-3}(0)}\log|y-\beta|_v\le -\fr{1}{d-1}M_v(f_\textbf{c})+\delta_v.\]
\end{prop}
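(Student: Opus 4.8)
The plan is to run the non-archimedean analogue of the argument in Corollary~\ref{cor:preimagesmash}, using Proposition~\ref{prop:nonarchmoduli} and Proposition~\ref{prop:nonarchdiskpreimage} in place of the complex-analytic facts. First I would reduce to the case $M_v(f_\textbf{c})$ large: by Lemma~\ref{lem:Ingram2} we have $\log C_{f_\textbf{c},v}\le M_v(f_\textbf{c})+\xi_v$, so if $M_v(f_\textbf{c})$ lies below some threshold $m_v$ depending only on $d$ and $v$, then for any $y\in f_\textbf{c}^{-3}(\alpha)\subset\overline{D}(0,C_{f_\textbf{c},v})$ and any root $\beta$ of $f_\textbf{c}^3$ the ultrametric inequality gives $\log|y-\beta|_v\le\log C_{f_\textbf{c},v}\le m_v+\xi_v$, and this contributes to $\delta_v$. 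So assume $M=M_v(f_\textbf{c})$ is large. As in the archimedean proof, Lemmas~\ref{lem:Ingram1} and~\ref{lem:Ingram2} imply $\{z:G_{f_\textbf{c},v}(z)>dM\}\subseteq B_{f_\textbf{c},v}(\infty)$ and that the filled-in set $D_2=\{z:G_{f_\textbf{c},v}(z)\le dM\}$ is a disk of radius comparable (on the log scale, up to $\xi_v$) to $e^{dM}$; since $0$ is fixed by $f_\textbf{c}$ we have $0\in D_2$.

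Next, if $\alpha\notin B_{f_\textbf{c},v}(\infty)$ then $G_{f_\textbf{c},v}(\alpha)\le\log C_{f_\textbf{c},v}\le M+\xi_v$, and assuming $M$ large enough that $M+\xi_v<d^3 M$, the transformation rule~\eqref{eqn:transformation} gives $G_{f_\textbf{c},v}(y)=G_{f_\textbf{c},v}(\alpha)/d^3\le M/d^2$ for every $y\in f_\textbf{c}^{-3}(\alpha)$. By Proposition~\ref{prop:nonarchdiskpreimage} applied to $f_\textbf{c}^3$ (a polynomial of degree $d^3$) and the disk $D_2$, the pre-image $f_\textbf{c}^{-3}(D_2)$ is a disjoint union of disks, and $y$ lies in one such disk $D_1$ on which $f_\textbf{c}^3$ has some degree $d_1\le d^3$ onto $D_2$; moreover $D_1\subseteq\{z:G_{f_\textbf{c},v}(z)\le M/d^2\}$ by the transformation rule, so the radius of $D_1$ is (on the log scale) at most comparable to $M/d^2$ plus a bounded error. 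The key point is to bound the radius of $D_1$ from above using the modulus computation: the region $dM<G_{f_\textbf{c},v}(z)$, pulled back, produces an annulus $A(f_\textbf{c}^3)$ with outer boundary the boundary of $D_2$ and inner boundary the boundary of $D_1$; running the annulus-modulus argument of Proposition~\ref{prop:annuli} but with Proposition~\ref{prop:nonarchmoduli} replacing Lemma~\ref{lem:kto1annulus} (and using that in $\C_v$ every annulus is literally a round annulus, so no McMullen-type correction is needed, and Grötzsch is exact because the fundamental subannuli tile the B\"ottcher annulus), we get $\mathrm{mod}(A(f_\textbf{c}^3))\ge\frac{1}{2\pi}(d+\frac{1}{d-1})M$. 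Since the outer radius is comparable to $e^{dM}$ and $\mathrm{mod}=\frac{1}{2\pi}\log(r_{\mathrm{out}}/r_{\mathrm{in}})$, the inner radius $r_{\mathrm{in}}$ — which is exactly the radius of $D_1$ — satisfies $\log r_{\mathrm{in}}\le dM-(d+\frac{1}{d-1})M+O_v(1)=-\frac{1}{d-1}M+O_v(1)$. Finally, since $0\in D_2$ and $f_\textbf{c}^3(D_1)=D_2$, some pre-image $\beta\in f_\textbf{c}^{-3}(0)$ lies in $D_1$, hence $\log|y-\beta|_v\le\log r_{\mathrm{in}}\le-\frac{1}{d-1}M_v(f_\textbf{c})+O_v(1)$, with the $O_v(1)$ absorbing all the $\xi_v$ and threshold constants into $\delta_v$, which depends only on $d$ and $v$.

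The main obstacle I anticipate is making the non-archimedean Green's function picture precise enough to justify the annulus decomposition: over $\C_v$ one has to verify that the level sets $\{G_{f_\textbf{c},v}=c\}$ for $c$ in the relevant range are boundaries of disks (equivalently that $\{G_{f_\textbf{c},v}<c\}$ is a disk), that the "fundamental subannuli" obtained by deleting grand-orbit level sets genuinely tile the B\"ottcher annulus $\{M<G_{f_\textbf{c},v}<dM\}$ with equality of moduli, and that $f_\textbf{c}^3$ restricted to the relevant annuli is an honest covering map of the right degree so that Proposition~\ref{prop:nonarchmoduli} applies. This rests on the hypothesis that some critical point of $f_\textbf{c}$ has $v$-adically unbounded orbit (i.e.\ $M_v(f_\textbf{c})>0$), which is exactly what guarantees a nontrivial escaping-critical-point tree structure in the sense of \cite{DeMarco,DM}; the bookkeeping with the degrees $d_i$ from Proposition~\ref{prop:nonarchdiskpreimage} has to be reconciled with the $(d-1)^2$ and $(d-1)$ factors appearing in Proposition~\ref{prop:annuli}, but as in the archimedean case the Grötzsch inequality only needs the total degree, so this should go through with the same arithmetic.
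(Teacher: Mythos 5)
Your proposal is correct and follows essentially the same route as the paper: reduce to large $M_v(f_{\textbf{c}})$, identify $D_2=\{G_{f_{\textbf{c}},v}\le dM\}$ as a disk containing $0$ via Lemmas~\ref{lem:Ingram1} and~\ref{lem:Ingram2}, show via Proposition~\ref{prop:nonarchdiskpreimage} that the component $D_1$ of $f_{\textbf{c}}^{-3}(D_2)$ containing $y$ maps onto $D_2$, bound $\mathrm{mod}(D_2\setminus\overline{D_1})$ from below by running the Proposition~\ref{prop:annuli} argument with Proposition~\ref{prop:nonarchmoduli} in place of Lemma~\ref{lem:kto1annulus} (no Gr\"otzsch or McMullen correction needed in the ultrametric setting), and convert the modulus bound into the stated upper bound on $\log r(D_1)\ge\log|y-\beta|_v$. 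The obstacles you flag at the end (that sublevel sets are disks and that the fundamental subannuli tile exactly) are real points to check, but the paper handles them the same way and they do go through.
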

\begin{proof} Lemma \ref{lem:Ingram1} implies that there exists an $m_v$ such that for all $f_\textbf{c}$ with $M=M_v(f_\textbf{c})\ge m_v$, the level set $G_{f_\textbf{c},v}(z)=dM$ is in $B_{f_\textbf{c},v}(\infty)$, and the distortion factor $\epsilon(f_\textbf{c},v,z)$ is 0. As Lemma \ref{lem:Ingram2} specifies that there are finitely many non-zero $\xi_v$, this $m_v$ can be chosen to be independent of $v\in M_K^0$. We denote it by $m_0$.
	
From Lemma \ref{lem:Ingram1}, it follows that when $M_v(f_\textbf{c})\ge m_0$, the level set $G_{f_\textbf{c},v}(z)=dM$ is a set of the form $|z|_v=R$. For any connected component (disk) pre-image $D_1$ of $D_2:=\{z\in\CC_v\mid G_{f_\textbf{c},v}(z)<dM\}$ under $f_\textbf{c}^3$, $D_2-\overline{D_1}$ is an annulus $A(f_\textbf{c}^3)$. A similar proof as the proof of Proposition \ref{prop:annuli} shows that the modulus of $A(f_\textbf{c}^3)$ is at least $\fr{1}{2\pi}(d+\fr{1}{d-1})M_v(f_\textbf{c})$ (note that this annulus is round, as non-archimedean disks are centerless, so we do not need to invoke anything akin to the Gr\"{o}tzsch inequality). From Proposition \ref{prop:nonarchdiskpreimage}, we know that $f_\textbf{c}^3$ maps $D_1$ onto $D_2$. Thus, if $\alpha\notin B_{f_\textbf{c},v}(\infty)$ where $M_v(f_\textbf{c})\ge m_0$, then for every $y\in f_\textbf{c}^{-3}(\alpha)$, we have \begin{equation}{\label{eqn:largemv}}\min_{\beta\in f_\textbf{c}^{-3}(0)}|y-\beta|_v\le \textup{exp}\left(-\fr{1}{d-1}M_v(f_\textbf{c})\right).\end{equation}
	
On the other hand, when $M_v(f_\textbf{c})<m_0$, we have \[\log C_{f_\textbf{c},v}<m_0+\xi_v,\] so by the ultrametric inequality, if $\alpha\notin B_{f_\textbf{c},v}(\infty)$, then every $y\in f_\textbf{c}^{-3}(\alpha)$ satisfies \begin{equation}{\label{eqn:smallmv}}\log|y-\beta|_v<m_0+\xi_v\le \delta_v\end{equation} for some $\delta_v$ depending only on $d$ and on $v$. Combining (\ref{eqn:largemv}) and (\ref{eqn:smallmv}) completes the proof.

\end{proof}

\section{Key Inequalities}

In this section, as well as in \S\ref{section:mainthm}, the main ideas are inspired by those used by Ingram in \cite{Ingram2}. Let $d\ge 2$. For $\phi(z)=a_dz^d+a_{d-1}z^{d-1}+\dots+a_1z+a_0\in K[z]$ and $v\in M_K$, let \[\lambda_v(\phi)=\max\{\lambda_v(a_i)\mid 0\le i\le d\},\] where $\lambda_v(a_i)=\log\max\{1,|a_i|_v\}$, let $n_v=[K_v:\Q_v]$, and let \[h(\phi)=\dfrac{1}{[K:\Q]}\sum_{v\in M_K} n_v\lambda_v(\phi).\] Let $\phi\in K[z]$ be conjugate to $f_\textbf{c}\in L[z]$, where $L$ is as in Lemma \ref{lem:normalformconjugation}. From this point onward, we denote $f_\textbf{c}$ by $f$ for ease of notation. 

\begin{lem}[\cite{Ingram1}]{\label{lem:httoesc}} For any $v\in M_L$, we have \[\lambda_v(f)\le dM_v(f)+\eta\] for some constant $\eta$ depending only on $d$. \end{lem}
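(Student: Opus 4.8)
The plan is to bound $\lambda_v(f)$ directly in terms of $\log C_{f,v}$, using only the \emph{definition} of $C_{f,v}$ from Lemma~\ref{lem:Ingram1}, and then to substitute the estimate of Lemma~\ref{lem:Ingram2}; the two cited lemmas carry the real content, and what remains is bookkeeping at the various places. Write $f(z)=a_dz^d+\cdots+a_1z+a_0$. Since $f=f_{\textbf c}$ is in normal form we have $a_d=1/d$ and $a_0=0$ (the map $f_{\textbf c}$ fixes $0$), so it is enough to control $\lambda_v(a_i)$ for $1\le i\le d$. For $i=d$ one has $\lambda_v(a_d)\le\log d$, and for $1\le i\le d-1$ the inequality defining $C_{f,v}$ gives $C_{f,v}\ge |2d|_v\,|a_i/a_d|_v^{1/(d-i)}$, hence $\log|a_i|_v\le \log|a_d|_v+(d-i)\bigl(\log C_{f,v}-\log|2d|_v\bigr)$. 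Using the bounds $\bigl|\log|d|_v\bigr|\le\log d$ and $\bigl|\log|2d|_v\bigr|\le\log(2d)$, valid at every place under the normalization fixed in \S2, together with the trivial lower bound $C_{f,v}\ge|2d|_v\ge(2d)^{-1}$, a short case check (according to whether $\log C_{f,v}$ is positive or not, and whether $v$ is archimedean) shows that each $\lambda_v(a_i)$ is at most $(d-1)\log^+ C_{f,v}+c(d)$ for a constant $c(d)$ depending only on $d$; the coefficient $d-1$ rather than $d$ on $\log^+ C_{f,v}$ is exactly what the vanishing of $a_0$ buys. Taking the maximum over $i$ yields $\lambda_v(f)\le (d-1)\log^+ C_{f,v}+c(d)$.

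Now Lemma~\ref{lem:Ingram2} gives $\log C_{f,v}\le M_v(f)+\xi_v$, with $\xi_v$ depending only on $d$ and $v$ and equal to $0$ for all but finitely many $v$. What makes $\eta$ independent of the place is that $C_{f,v}$ and $M_v(f)$ are both defined through an absolute value normalized to restrict to the standard one on $\Q$, so the optimal $\xi_v$ depends only on $d$ and on the place of $\Q$ lying below $v$; since it vanishes for all but finitely many $v$, those exceptional places of $\Q$ form a finite set determined by $d$ alone (one checks from the formula for $C_{f,v}$, equivalently from \cite[Lemmas 2.2 and 2.5]{Ingram1}, that it consists of the archimedean place and the rational primes $p\le d$), and hence $\Xi(d):=\sup_v|\xi_v|$ is finite and depends only on $d$. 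Combining with $M_v(f)\ge0$ we get $\log^+ C_{f,v}\le M_v(f)+\Xi(d)$, and therefore $\lambda_v(f)\le (d-1)\bigl(M_v(f)+\Xi(d)\bigr)+c(d)\le dM_v(f)+\eta$, where $\eta:=(d-1)\Xi(d)+c(d)$ depends only on $d$. This is the asserted inequality.

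I do not expect any genuine obstacle: the argument is three inequalities chained together. The only delicate point is the first step, namely keeping track of the archimedean versus non-archimedean normalizations in the explicit formula for $C_{f,v}$ and confirming that all the error terms collapse into a constant depending on $d$ alone; the substantive input, that $C_{f,v}$ is no larger than $e^{M_v(f)}$ up to such a constant, is already supplied by Lemma~\ref{lem:Ingram2}, and our only addition is the elementary observation that in the normal form $f_{\textbf c}$ the quantity $C_{f,v}$ is also no smaller than the largest coefficient of $f$, up to a constant depending only on $d$.
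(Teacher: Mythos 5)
Your proposal is correct and takes the same route as the paper: the paper's proof is simply the citation to Ingram's Lemmas 2.1, 2.2, and 2.5 (encapsulated in the paper's Lemmas~\ref{lem:Ingram1} and~\ref{lem:Ingram2}), and you have filled in precisely the bookkeeping that the citation leaves to the reader, including the point that the $\xi_v$ are uniformly bounded because they depend only on $d$ and the place of $\Q$ below $v$. As a bonus, your use of $a_0=0$ yields the slightly sharper coefficient $d-1$ in place of $d$.
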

\begin{proof}
	This results from combining Lemmas 2.1, 2.2 and 2.5 of \cite{Ingram1}. 
\end{proof}

\begin{lem}{\label{lem:alphabasin}}
	Let $v\in M_L$. Let $\alpha\in B_{f,v}(\infty)\cap L$. Then there exists a constant $C_1$ depending only on $d$ such that if $j>i$, then \[\lambda_v(f)+d(d-1)\log|f^i(\alpha)-f^j(\alpha)|_v\le d^{j+3}\hat{\lambda}_v(\alpha)+C_1.\]
\end{lem}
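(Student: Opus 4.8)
The plan is to exploit that $\alpha$ lies in the attracting basin $B_{f,v}(\infty)$, so that all of $\alpha, f(\alpha), f^2(\alpha), \dots, f^j(\alpha)$ lie in $B_{f,v}(\infty)$ as well (the basin is forward-invariant once we are above the escape threshold $C_{f,v}$, since $G_{f,v}(f(z)) = dG_{f,v}(z)$ and the region $G_{f,v}(z) > dM_v(f)$ is contained in $B_{f,v}(\infty)$ by Lemmas 2.1 and 2.2). On this region we have the precise asymptotic from Lemma \ref{lem:Ingram1}: for each $k$,
\[
G_{f,v}(f^k(\alpha)) = \log|f^k(\alpha)|_v + \tfrac{1}{d-1}\log|a_d|_v + \epsilon_k,
\]
with $\epsilon_k$ bounded by $\log 2$ in absolute value (and $\epsilon_k = 0$ in the non-archimedean case), and here $a_d = 1/d$ for $f = f_\textbf{c}$. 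Combined with the transformation rule $G_{f,v}(f^k(\alpha)) = d^k G_{f,v}(\alpha) = d^k \hat\lambda_v(\alpha)$, this expresses $\log|f^k(\alpha)|_v$ in terms of $d^k\hat\lambda_v(\alpha)$ up to an $O(1)$ depending only on $d$.

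First I would write $f^j(\alpha) - f^i(\alpha) = f^i(f^{j-i}(\alpha)) - f^i(\alpha)$ is not quite the right move; instead, since both $f^i(\alpha)$ and $f^j(\alpha)$ are large (being in $B_{f,v}(\infty)$), I would estimate $\log|f^i(\alpha) - f^j(\alpha)|_v$ directly. In the non-archimedean case the ultrametric inequality gives $\log|f^i(\alpha) - f^j(\alpha)|_v \le \max\{\log|f^i(\alpha)|_v, \log|f^j(\alpha)|_v\} = \log|f^j(\alpha)|_v$ (the later iterate is the larger one, as $\hat\lambda_v(\alpha) > 0$ on the basin and $G_{f,v}$ grows geometrically). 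In the archimedean case the triangle inequality gives $\log|f^i(\alpha) - f^j(\alpha)|_v \le \log(2|f^j(\alpha)|_v) = \log|f^j(\alpha)|_v + \log 2$. Either way,
\[
\log|f^i(\alpha) - f^j(\alpha)|_v \le \log|f^j(\alpha)|_v + O(1) = d^j\hat\lambda_v(\alpha) - \tfrac{1}{d-1}\log|a_d|_v + O(1).
\]
Then I multiply through by $d(d-1)$ and add $\lambda_v(f)$ to both sides, bounding $\lambda_v(f)$ via Lemma \ref{lem:httoesc} by $dM_v(f) + \eta \le d\,G_{f,v}(\alpha)\cdot(\text{something})$? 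No — here I must be more careful: $M_v(f)$ need not be comparable to $\hat\lambda_v(\alpha)$. Instead I would note $\lambda_v(f) \le dM_v(f) + \eta$, and absorb $dM_v(f)$ into the left-hand estimate by observing that $\alpha \in B_{f,v}(\infty)$ forces $\hat\lambda_v(\alpha) = G_{f,v}(\alpha) \ge $ (something in terms of $C_{f,v}$ and hence $M_v(f)$) — wait, that is false in general either, since a point just barely in the basin can have tiny escape rate while $M_v(f)$ is enormous only if... Actually the cleaner route: $\alpha \in B_{f,v}(\infty)$ means $|\alpha|_v > C_{f,v}$, so by Lemma \ref{lem:Ingram1}, $\hat\lambda_v(\alpha) = G_{f,v}(\alpha) = \log|\alpha|_v + \tfrac{1}{d-1}\log|a_d|_v + \epsilon \ge \log C_{f,v} + \tfrac{1}{d-1}\log|a_d|_v + \epsilon$, and by Lemma \ref{lem:Ingram2}, $\log C_{f,v} \le M_v(f) + \xi_v$; crucially $\xi_v = 0$ for all but finitely many $v$, but for this lemma we need uniformity only in $d$, so I will need $\xi_v$ bounded — re-examining, the constant $C_1$ is allowed to depend on $d$ only, so I must show the finitely many nonzero $\xi_v$ do not actually enter, which follows because for those $v$ the bound is used only in a range already absorbed; this is the delicate bookkeeping point.

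The main obstacle, then, is the uniformity of the constant $C_1$ in $d$ alone: I must chase through the $O(1)$ terms ($\epsilon_k$ bounded by $\log 2$, the $\eta$ from Lemma \ref{lem:httoesc}, the $\log 2$ from the triangle inequality, and the interplay with $\log C_{f,v}$) and verify that every constant ultimately depends only on $d$, using that $a_d = 1/d$ is fixed once $d$ is, and that $\xi_v$ is nonzero only for finitely many $v$ and zero wherever the archimedean $\epsilon$-terms are needed. The $d^{j+3}$ on the right-hand side (rather than $d^j$) gives ample slack: the factor $d^j$ matches $\log|f^j(\alpha)|_v$ exactly after multiplying by $d(d-1)$ one gets a $d^{j+1}(d-1)\hat\lambda_v(\alpha) \le d^{j+3}\hat\lambda_v(\alpha)$ bound since $d(d-1) < d^3$ for $d \ge 2$, and the remaining terms $\lambda_v(f) + (\text{constants})$ are dominated by the positive slack $(d^{j+3} - d^{j+1}(d-1))\hat\lambda_v(\alpha)$ combined with $C_1$. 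So the inequality is quite loose, and the real work is purely the constant-tracking to confirm dependence on $d$ only.
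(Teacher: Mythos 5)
Your bound on $d(d-1)\log|f^i(\alpha)-f^j(\alpha)|_v$ via the triangle/ultrametric inequality and Lemma~\ref{lem:Ingram1} is correct and matches the paper. The gap is in how you control the $\lambda_v(f)$ term on the left. You need an inequality of the shape $\lambda_v(f)\le d\,\hat\lambda_v(\alpha)+O_d(1)$, and you correctly observe that $\alpha\in B_{f,v}(\infty)$ gives $\hat\lambda_v(\alpha)\ge \log C_{f,v}+O_d(1)$; so what you need is a \emph{lower} bound on $\log C_{f,v}$ in terms of $\lambda_v(f)$ (or of $M_v(f)$). But the two auxiliary results you invoke run in the \emph{wrong direction}: Lemma~\ref{lem:httoesc} gives $\lambda_v(f)\le d M_v(f)+\eta$ and Lemma~\ref{lem:Ingram2} gives $\log C_{f,v}\le M_v(f)+\xi_v$, both upper bounds on the left-hand quantities. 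Chaining $\hat\lambda_v(\alpha)\ge\log C_{f,v}+O(1)$ with $\log C_{f,v}\le M_v(f)+\xi_v$ yields nothing: you cannot conclude $M_v(f)\le\hat\lambda_v(\alpha)+O(1)$, and hence cannot absorb $\lambda_v(f)$. Your worry about the non-uniformity of $\xi_v$ is secondary; the primary problem is that these lemmas simply do not produce the inequality you need.

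The paper sidesteps $M_v(f)$ entirely. From the explicit formula for $C_{f,v}$ in Lemma~\ref{lem:Ingram1}, since $a_d=1/d$, one reads off directly that
\[
\log C_{f,v}\ \ge\ \log(2d)+\tfrac{1}{d}\lambda_v(f),
\]
so $\alpha\in B_{f,v}(\infty)$ forces $\hat\lambda_v(\alpha)\ge \log(2d)+\tfrac{1}{d}\lambda_v(f)-\log 2$, i.e. $\lambda_v(f)\le d\,\hat\lambda_v(\alpha)+O_d(1)$. Combined with the (correct) bound $d(d-1)\log|f^i(\alpha)-f^j(\alpha)|_v\le d(d-1)\bigl(d^j\hat\lambda_v(\alpha)+O_d(1)\bigr)$, this gives $(d^{j+1}(d-1)+d)\hat\lambda_v(\alpha)+O_d(1)\le d^{j+3}\hat\lambda_v(\alpha)+C_1$. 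To repair your argument, replace the detour through $M_v(f)$, Lemma~\ref{lem:httoesc}, and Lemma~\ref{lem:Ingram2} with this direct lower bound on $\log C_{f,v}$ extracted from the definition of $C_{f,v}$; then all the remaining constants genuinely depend only on $d$ and the bookkeeping you flagged becomes routine.
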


\begin{proof}
	From the definition of $C_{f,v}$, we have \[\log C_{f,v}\ge \log(2d)+\fr{1}{d}\lambda_v(f).\] Thus, \[\alpha\in B_{f,v}(\infty)\llra \lambda_v(\alpha)>\lambda_v(C_{f,v})\ge \log(2d)+\fr{1}{d}\lambda_v(f).\] If $\alpha\in B_{f,v}(\infty)\cap L$, then, this yields \begin{equation}{\label{eqn:ht1}}\hat{\lambda}_v(\alpha)\ge \log(2d)+\fr{1}{d}\lambda_v(f)-\log2\end{equation} by Lemma \ref{lem:Ingram1}. Moreover, by Lemma \ref{lem:Ingram1} and the triangle inequality, \begin{equation}\begin{split}{\label{eqn:ht2}}d(d-1)\log|f^i(\alpha)-f^j(\alpha)|_v & \le d(d-1)(\log2+\hat{\lambda}_v(f^j(\alpha))+\log2) \\ & =d(d-1)(d^j\hat{\lambda}_v(\alpha)+2\log2).\end{split}\end{equation} Therefore, rearranging (\ref{eqn:ht1}) and combining with (\ref{eqn:ht2}) gives \begin{equation*}\begin{split}\lambda_v(f)+d(d-1)\log|f^i(\alpha)-f^j(\alpha)|_v & \le d\hat{\lambda}_v(\alpha)+d\log2-d\log(2d) \\ & +d(d-1)(d^j\hat{\lambda}_v(\alpha)+2\log2) \\ & \le (d^{j+1}(d-1)+d)\hat{\lambda}_v(\alpha)+C_1 \\ & \le d^{j+3}\hat{\lambda}_v(\alpha)+C_1\end{split}\end{equation*} whenever $\alpha\in B_{f,v}(\infty)$.
\end{proof}

\begin{prop}{\label{prop:pigeonhole}}
	Let $\alpha\in L$ have infinite orbit under $f$, let $X$ be a finite set of positive integers, and let $v\in M_L$. Then there exists $Y\subset X$ containing at least $\dfrac{\#X-3}{d^3+1}$ values such that for all $i,j\in Y$ with $j>i$, we have \[\lambda_v(f)+d(d-1)\log|f^i(\alpha)-f^j(\alpha)|_v\le d^{j+3}\hat{\lambda}_v(\alpha)+C_2\] for some constant $C_2$ depending only on $d$.
	
\end{prop}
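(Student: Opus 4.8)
The plan is to split the index set $X$ into two parts according to whether the iterates $f^i(\alpha)$ lie in $B_{f,v}(\infty)$ or not, and to handle each part separately. First I would set $X_{\text{in}} = \{i \in X \mid f^i(\alpha) \in B_{f,v}(\infty)\}$ and $X_{\text{out}} = X \setminus X_{\text{in}}$. For the indices in $X_{\text{in}}$, Lemma \ref{lem:alphabasin} applies directly (with $\alpha$ there replaced by $f^i(\alpha)$, and noting $\hat\lambda_v(f^i(\alpha)) = d^i\hat\lambda_v(\alpha)$), so for any $i<j$ in $X_{\text{in}}$ the desired inequality holds with $C_2 = C_1$; thus $Y = X_{\text{in}}$ works if $X_{\text{in}}$ is large. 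The real work is therefore to produce a large subset $Y \subset X_{\text{out}}$ — here is where Corollary \ref{cor:preimagesmash} (archimedean $v$) and Proposition \ref{prop:badredpreimagesquash} (non-archimedean $v$) enter, together with a pigeonhole argument on the roots of $f^3$.

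The key observation for $X_{\text{out}}$: if $i \geq 3$ and $f^i(\alpha) \notin B_{f,v}(\infty)$, then $y := f^{i-3}(\alpha)$ is a point of $f^{-3}(f^i(\alpha))$, so by Corollary \ref{cor:preimagesmash} / Proposition \ref{prop:badredpreimagesquash} there is a root $\beta \in f^{-3}(0)$ — there are at most $d^3$ of these — with $\log|f^{i-3}(\alpha) - \beta|_v \leq -\tfrac{1}{d-1}M_v(f) + \delta_v$. Assign to each such index $i$ a choice of nearby root $\beta(i) \in f^{-3}(0)$. By the pigeonhole principle, among the (at least $\#X_{\text{out}} - 3$) shifted indices $i-3$ with $i \in X_{\text{out}}$, $i \geq 3$, some set $Y'$ of at least $(\#X_{\text{out}}-3)/d^3$ of them share a common root $\beta$. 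Then for $i', j' \in Y'$ we have $f^{i'}(\alpha)$ and $f^{j'}(\alpha)$ both within distance $\exp(-\tfrac{1}{d-1}M_v(f) + \delta_v)$ of $\beta$, so by the triangle/ultrametric inequality $\log|f^{i'}(\alpha) - f^{j'}(\alpha)|_v \leq -\tfrac{1}{d-1}M_v(f) + \delta_v + \log 2$. Combining with Lemma \ref{lem:httoesc}, which gives $\lambda_v(f) \leq dM_v(f) + \eta$, one bounds $\lambda_v(f) + d(d-1)\log|f^{i'}(\alpha) - f^{j'}(\alpha)|_v \leq dM_v(f) + \eta + d(d-1)(-\tfrac{1}{d-1}M_v(f) + \delta_v + \log 2) = (d - d)M_v(f) + O(1) = O(1)$, where the $M_v(f)$ terms cancel — this is the crucial cancellation. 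Translating the shifted indices back ($Y = \{i'+3 : i' \in Y'\}$, with the at-most-3 small indices discarded) and absorbing everything into a constant $C_2$ depending only on $d$ (on the archimedean place this uses that $\delta$ in Corollary \ref{cor:preimagesmash} depends only on $d$; on non-archimedean places of good reduction $\delta_v = 0$ and we in fact do not need this argument there, so the relevant $\delta_v$ are among finitely many), completes this branch.

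Finally I would combine the two cases: since $\#X = \#X_{\text{in}} + \#X_{\text{out}}$, at least one of them has size $\geq \#X/2$; whichever it is, the construction above yields $Y$ of size at least $\min\{\#X_{\text{in}},\ (\#X_{\text{out}}-3)/d^3\}$, and a short computation shows this is at least $(\#X - 3)/(d^3+1)$ in all cases (the worst case balances $\#X_{\text{in}}$ against $(\#X_{\text{out}}-3)/d^3$). Taking $C_2 = \max\{C_1, \text{the non-archimedean/archimedean constant above}\}$, which depends only on $d$, finishes the proof.

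The main obstacle I anticipate is bookkeeping rather than conceptual: making sure the constant $C_2$ genuinely depends only on $d$ and not on $v$ — this hinges on the fact that $\delta_v$ in Proposition \ref{prop:badredpreimagesquash} is nonzero for only finitely many $v$ (from Lemma \ref{lem:Ingram2}) and $\delta$ in Corollary \ref{cor:preimagesmash} depends only on $d$ — and on verifying that the fraction $(\#X-3)/(d^3+1)$ survives the split-and-pigeonhole losses (the "$-3$" accounting for discarding indices $0,1,2$ where the $f^{-3}$ argument does not literally apply, and the $d^3+1$ matching the $d^3$ possible roots of $f^3$ plus the in-basin case).
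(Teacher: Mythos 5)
Your decomposition into $X_{\text{in}}$ and $X_{\text{out}}$, the pre-image/pigeonhole argument over the at most $d^3$ roots of $f^3$, and the crucial cancellation of the $M_v(f)$ terms via Lemma \ref{lem:httoesc} are all exactly the paper's approach. There is, however, one genuine error in the final assembly of the second branch.

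Having produced the set $Y'$ of \emph{shifted-down} indices $i' = i-3$ and proved the estimate
\[
\lambda_v(f)+d(d-1)\log|f^{i'}(\alpha)-f^{j'}(\alpha)|_v \le C_2
\]
for $i',j'\in Y'$, you then set $Y = \{i'+3 : i'\in Y'\}$. But the Proposition asks you to bound $\lambda_v(f)+d(d-1)\log|f^i(\alpha)-f^j(\alpha)|_v$ for $i,j\in Y$, which for your $Y$ means $|f^{i'+3}(\alpha)-f^{j'+3}(\alpha)|_v$ --- a quantity your argument never controls. Indeed it cannot be controlled this way: the points $f^{i'}(\alpha)$, $f^{j'}(\alpha)$ cluster near a root $\beta$ of $f^3$ precisely because $f^3$ is strongly \emph{expanding} there (that is the source of the factor $1/(d-1)$), so pushing them forward by $f^3$ destroys the closeness. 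The fix is simply to keep $Y = Y'$. Since the Proposition is applied with $X = \{1,\dots,N\}$, the shifted indices $i-3$ (for $i\in X_{\text{out}}$, $i\ge 4$) remain inside $X$, which is what the ``$-3$'' in the count is already paying for; your subsequent ``discarding of at most 3 small indices'' is a double-count of the same loss. With $Y=Y'$ the bound you derived (a constant, hence $\le d^{j+3}\hat\lambda_v(\alpha)+C_2$ since $\hat\lambda_v\ge 0$) is exactly what is needed, and this is what the paper does. One further small slip: in the closing paragraph you write $\min\{\#X_{\text{in}},\,(\#X_{\text{out}}-3)/d^3\}$ where you mean $\max$ --- you take whichever branch is larger --- but this is clearly a typo and the counting itself is correct.
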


\begin{proof}
	Suppose that at least $\fr{\#X-3}{d^3+1}$ values $k\in X$ have $f^k(\alpha)\in B_{f,v}(\infty)$, and let $Y$ be the set of such values. If $i,j\in Y$ with $j>i$, then Lemma \ref{lem:alphabasin} applied to $f^i(\alpha)$ implies \begin{equation*}\begin{split} \lambda_v(f)+d(d-1)\log|f^i(\alpha)-f^{j-i}(f^i(\alpha))| & \le d^{j-i+3}\hat{\lambda}_v(f^i(\alpha))+C_1 \\ &= d^{j+3}\hat{\lambda}_v(\alpha)+C_1.\end{split}\end{equation*}
	
	Now suppose that fewer than $\fr{\#X-3}{d^3+1}$ values $k\in X$ satisfy $f^k(\alpha)\in B_{f,v}(\infty)$. Then there are more than $\fr{d^3(\#X-3)}{d^3+1}+3$ values $k\in X$ such that $f^k(\alpha)\notin B_{f,v}(\infty)$, so more than $\fr{d^3}{d^3+1}(\#X-3)$ such that $f^{k+3}(\alpha)\notin B_{f,v}(\infty)$. By Corollary \ref{cor:preimagesmash}, Proposition \ref{prop:badredpreimagesquash}, and the pigeon-hole principle, there is a $\beta\in \overline{\Q}$ with $f^3(\beta)=\beta$ and
	
	\[\log|f^k(\alpha)-\beta|_v\le \fr{-1}{d-1}M_v(f)+\max\{\delta,\delta_v\}\] for at least $\fr{\#X-3}{d^3+1}$ values $k\in X$. Applying Lemma \ref{lem:httoesc} to the right-hand side, this gives \begin{equation*}\begin{split}\log|f^k(\alpha)-\beta|_v & \le \fr{1}{d(d-1)}(\eta-\lambda_v(f))+\max\{\delta,\delta_v\}\\ 
	& =\fr{-\lambda_v(f)}{d(d-1)}+\fr{\eta}{d(d-1)}+\max\{\delta,\delta_v\}\end{split}\end{equation*}
 for at least $\fr{\#X-3}{d^3+1}$ values $k\in X$. If $i$ and $j$ are two of these values, then by the triangle inequality, \[\log|f^i(\alpha)-f^j(\alpha)|_v\le \fr{-\lambda_v(f)}{d(d-1)}+\fr{\eta}{d(d-1)}+\max\{\delta,\delta_v\}+\log2\] where $\delta,\eta,$ and $\delta_v$ depend only on $d$. Hence \begin{equation*}\lambda_v(f)+d(d-1)\log|f^i(\alpha)-f^j(\alpha)|_v\le \eta+d(d-1)(\max\{\delta,\delta_v\}+\log2).\end{equation*} Setting \[C_2=C_1+\eta+d(d-1)(\max\{\delta,\delta_v\}+\log2)\] completes the proof. \end{proof}

\begin{lem}{\label{lem:nonsuppd}} Let $v$ be a non-archimedean place of $L$ such that $f\in L[z]$ has good reduction at $v$. Then for all $\alpha\in L$ with $\hat{h}_f(\alpha)\ne 0$, and all $j>i$, we have \[\lambda_v(f)+d(d-1)\log|f^i(\alpha)-f^j(\alpha)|_v\le (d-1)d^{j+1}\hat{\lambda}_v(\alpha).\] \end{lem}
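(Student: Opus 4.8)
The plan is to exploit the fact that good reduction at a non-archimedean $v$ rigidly controls both $\lambda_v(f)$ and the local height $\hat\lambda_v$. First, since $f\in L[z]$ has good reduction at $v$, by definition the coefficients $a_i$ of $f$ satisfy $|a_d|_v=1$ and $|a_i|_v\le 1$ for $0\le i\le d-1$; hence $\lambda_v(a_i)=0$ for all $i$, and so $\lambda_v(f)=0$. Dividing the claimed inequality through by the positive constant $d(d-1)$ and using $d(d-1)d^j=(d-1)d^{j+1}$, it therefore suffices to prove
\[
\log\bigl|f^i(\alpha)-f^j(\alpha)\bigr|_v\le d^{\,j}\,\hat\lambda_v(\alpha).
\]

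Next I would invoke \cite[Proposition 1.4]{Benedetto2}: good reduction at $v$ forces $\hat\lambda_v(z)=\log\max\{1,|z|_v\}$ for all $z\in L$, so $\hat\lambda_v(\alpha)\ge 0$, and the transformation rule (\ref{eqn:transformation}) gives $\log\max\{1,|f^k(\alpha)|_v\}=\hat\lambda_v(f^k(\alpha))=d^k\hat\lambda_v(\alpha)$ for every $k\ge 0$. By the ultrametric inequality, $|f^i(\alpha)-f^j(\alpha)|_v\le\max\{|f^i(\alpha)|_v,|f^j(\alpha)|_v\}$, so it is enough to bound the logarithm of this maximum by $d^{\,j}\hat\lambda_v(\alpha)$. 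If $|\alpha|_v\le 1$ then $\hat\lambda_v(\alpha)=0$, and $|a_i|_v\le 1$ for all $i$ forces the entire forward orbit to remain in the closed unit disk, so the maximum is $\le 1$ and its logarithm is $\le 0$. If $|\alpha|_v>1$ then the leading term strictly dominates at each iterate (again because $|a_d|_v=1$ while $|a_i|_v\le 1$), so $|f^k(\alpha)|_v=|\alpha|_v^{d^k}$ for all $k\ge 0$; since $j>i$ this sequence is increasing, the maximum equals $|\alpha|_v^{d^j}$, and its logarithm is exactly $d^j\log|\alpha|_v=d^j\hat\lambda_v(\alpha)$. In both cases the displayed inequality holds, completing the proof.

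This is the gentlest of the key inequalities, so I do not anticipate any real obstacle; everything is forced by the definition of good reduction together with the identity $\hat\lambda_v=\log^+|\cdot|_v$. The only point requiring a moment's care is the case $|\alpha|_v>1$, where one must verify that every forward iterate lies \emph{strictly} outside the unit disk so that the ultrametric comparison is an equality rather than a mere inequality — immediate from $|a_d|_v=1$ and $|a_i|_v\le 1$. I would also note that the hypothesis $\hat h_f(\alpha)\ne 0$ is used only to rule out the degenerate case $f^i(\alpha)=f^j(\alpha)$, where the left-hand side is $-\infty$ and the statement holds vacuously, so it plays no real role in the argument.
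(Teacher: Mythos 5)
Your proof is correct and follows essentially the same route as the paper: both reduce via $\lambda_v(f)=0$, invoke the good-reduction identity $\hat\lambda_v=\log^+|\cdot|_v$, and apply the ultrametric inequality together with $\hat\lambda_v(f^k(\alpha))=d^k\hat\lambda_v(\alpha)$. The only stylistic difference is that the paper bounds $\log|f^i(\alpha)-f^j(\alpha)|_v$ directly by $\log\max\{1,|f^i(\alpha)|_v,|f^j(\alpha)|_v\}=\max\{\hat\lambda_v(f^i(\alpha)),\hat\lambda_v(f^j(\alpha))\}=d^j\hat\lambda_v(\alpha)$ in one line, whereas your case split on $|\alpha|_v\le1$ versus $|\alpha|_v>1$ repackages the same content and is not needed.
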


\begin{proof}
	Since $f$ has good reduction at $v$, $\lambda_v(f)=0$, so it suffices to show that \[d(d-1)\log|f^i(\alpha)-f^j(\alpha)|_v\le (d-1)d^{j+1}\hat{\lambda}_v(\alpha).\] But this is clear: $f$ having good reduction at $v$ implies that $\hat{\lambda}_v(\alpha)=\lambda_v(\alpha)$ for all $\alpha\in L$, and so \[\log|f^i(\alpha)-f^j(\alpha)|_v\le\log\max\{|f^i(\alpha)|_v,|f^j(\alpha)|_v,1\}= \max\{\hat{\lambda}_v(f^i(\alpha)),\hat{\lambda}_v(f^j(\alpha))\}=d^j\hat{\lambda}_v(\alpha)\] for all $j>i$.
\end{proof}

\section{Proof of Main Theorem}{\label{section:mainthm}}

\begin{proof}[Proof of Theorem \ref{thm:minht}] Let $\phi\in K[z]$ be affine conjugate to $f=f_\textbf{c}\in L[z]$, where $L$ is as in Lemma \ref{lem:normalformconjugation}, and let $\alpha\in L$ be such that $\hat{h}_{f}(\alpha)\ne 0$. Let \[N=\fr{3}{d^3}((d^3+3)^{r+s+1}-1),\] where $r$ is the number of distinct archimedean absolute values of $L$, and $s$ is the number of $v\in M_L^0$ of bad reduction. Denote the union of the set of places of bad reduction of $f_\textbf{c}$ and the archimedean places of $f$ by $M_L^1$. Let $X=\{1,2,\dots,N\}$, and let $v\in M_L^1$. It is easy to show that \[\fr{\#X-3}{d^3+1}\ge \fr{3}{d^3}((d^3+3)^{r+s}-1).\] Hence, by Proposition \ref{prop:pigeonhole}, we can choose a subset $X'\subset X$ such that \[\#X'\ge \fr{\#X-3}{d^3+1}\ge \fr{3}{d^3}((d^3+3)^{r+s}-1)\] and such that for all $i,j\in X'$ with $j>i$, we have \begin{equation}{\label{eqn:PH}}\lambda_v(f)+d(d-1)\log|f^i(\alpha)-f^j(\alpha)|_v\le d^{j+3}\hat{\lambda}_v(\alpha)+C_2\end{equation} for some constant $C_2$ depending only on $d$. By induction on the size of the resulting subsets, we obtain a set $Y\subset X$ with \[\#Y\ge\fr{3}{d^3}((d^3+3)^1-1)> 3\] such that (\ref{eqn:PH}) holds for every $v\in M_L^1$ and for all $j>i\in Y$. From this, we obtain \begin{equation}{\label{eqn:arch}}\sum_{v\in M_{L}^1}n_v[\lambda_v(f)+d(d-1)\log|f^i(\alpha)-f^j(\alpha)|_v] \le \sum_{v\in M_{L}^1}n_v[d^{j+3}\hat{\lambda}_v(\alpha)+C_2]\end{equation} for $j>i\in Y$, and $n_v=[L_v:\Q_v]$. From Lemma \ref{lem:nonsuppd}, we also have \begin{equation}{\label{eqn:nonarch}}\sum_{v\in M_L^0-M_L^1}n_v[\lambda_v(f)+d(d-1)\log|f^i(\alpha)-f^j(\alpha)|_v]\le n_vd^{j+3}\sum_{v\in M_{L}^0-M_L^1}\hat{\lambda}_v(\alpha).\end{equation} Summing (\ref{eqn:arch}) and (\ref{eqn:nonarch}) and applying the product formula, we deduce that \begin{equation*}\begin{split}[L:\Q]h(f) & = \sum_{v\in M_L} n_v(\lambda_v(f)+d(d-1)\log|f^i(\alpha)-f^j(\alpha)|_v)\\ & \le\sum_{v\in M_L}n_v(d^{j+3}\hat{\lambda}_v(\alpha))+\sum_{v\in M_L^1}n_vC_2 \\ & \le [L:\Q]d^{j+3}\hat{h}_f(\alpha)+[L:\Q](r+s)C_2\\ & \le [L:\Q]d^{N+3}\hat{h}_f(\alpha)+[L:\Q](r+s)C_2\end{split}\end{equation*} where $C_2$ depends only on $d$. In particular, \begin{equation}{\label{eqn:normalformresult}}h(f)<<\min\{\hat{h}_f(\alpha)\mid \alpha\in L, \hat{h}_f(\alpha)\ne 0\}.\end{equation} Now let \[h_{min}(\langle f\rangle)=\min\{h(g)\mid g\sim f, g\in\overline{\Q}[z]\},\] where $g\sim f$ if and only if $g$ is conjugate to $f$ by a M\"{o}bius transformation over $\overline{\Q}$. By the proof of \cite[Lemma 6.32]{Silverman}, we have \[h_{min}(\langle f\rangle)\asymp h_{\mathcal{M}_d}(\langle f\rangle).\] Thus (\ref{eqn:normalformresult}) implies \begin{equation}\begin{split}{\label{eqn:moduliht}} h_{\mathcal{M}_d}(\langle \phi\rangle)=h_{\mathcal{M}_d}(\langle f\rangle) & <<\min\{\hat{h}_f(\alpha)\mid \alpha\in L, \hat{h}_f(\alpha)\ne 0\} \\ & \le \min\{\hat{h}_\phi(\alpha)\mid \alpha\in K, \hat{h}_\phi(\alpha)\ne 0\},\end{split}\end{equation} where the implied constants depend only on $d$, $[L:\Q]$, and the number of places of bad reduction of $f=f_\textbf{c}$. (Recall that we fixed a choice of $h_{\mathcal{M}_d}$ in advance.) Finally, we note that the number of places of bad reduction of $f_{\textbf{c}}$ is at most the number of places of bad reduction of $\phi$ plus the number of primes dividing $d$. This can be seen from the conjugation taking $\phi$ to $f_\textbf{c}$, and from the fact that $\phi(z)=a_dz^d+a_{d-1}z^{d-1}+\cdots+a_1z+a_0$ has bad reduction at all primes dividing $a_d$. From Lemma \ref{lem:normalformconjugation} we had $[L:K]\le d(d-1)$, so the implied constants depend only on $d$, $[K:\Q]$, and the number of places of bad reduction of $\phi$. This completes the proof. \end{proof}

\end{document}